\date{}
\renewcommand{\uppercasenonmath}[1]{}
\theoremstyle{plain}
\theoremstyle{plain}
\newtheorem{theorem}{Theorem}[section]
\newtheorem{proposition}[theorem]{Proposition}
\newtheorem{lemma}[theorem]{Lemma}
\newtheorem{corollary}[theorem]{Corollary}
\newtheorem{example}[theorem]{Example}
\newtheorem*{open question}{Open Question}
\theoremstyle{definition}
\newtheorem*{acknowledgement}{Acknowledgement}
\theoremstyle{remark}
\newcommand{\Pp}{\mathcal{P}}
\newcommand{\Tor}{\mbox{\rm Tor}}
\newcommand{\im}{\mbox{\rm Im}}
\newcommand{\Prufer}{Pr\"{u}fer}
\newcommand{\Q}{\mathcal{Q}}
\def\tor{{\rm tor}}
\def\Hom{{\rm Hom}}
\def\Ext{{\rm Ext}}
\def\Tor{{\rm Tor}}
\def\Ker{{\rm Ker}}
\def\Im{{\rm Im}}
\def\Cok{{\rm Cok}}
\def\im{{\rm im}}
\def\Max{{\rm Max}}
\def\WQ{{\rm WQ}}
\def\T{{\rm T}}
\def\E{{\rm E}}
\def\DQ{{\rm DQ}}
\def\Spec{{\rm Spec}}
\newcommand{\m}{\frak{m}}
\newcommand{\p}{\frak{p}}
\newcommand{\Lu}{\mathcal{L}}
\begin{document}
\begin{center}
{\large  \bf  Some remarks on Lucas modules}

\vspace{0.5cm}  Xiaolei Zhang$^{a,\dag}$,\ Guocheng Dai$^{b}$,\   Wei Qi$^{a}$\\
%\bigskip

{\footnotesize a. School of Mathematics and Statistics, Shandong University of Technology, Zibo 255000, China\\

c.\ School of Mathematics Sciences, Sichuan Normal University,  Chengdu 610066, China\\

$\dag$.\ Corresponding author. E-mail addresses: (Xiaolei Zhang) zxlrghj@163.com\\}
\end{center}
%\begin{figure}[b]
%\rule[-2.5truemm]{5cm}{0.1truemm}\\[2mm]
%{\small }
%\end{figure}

%\begin{figure}[b]
%\rule[-2.5truemm]{5cm}{0.1truemm}\\[2mm]
%{\small }
%\end{figure}
\bigskip
\centerline { \bf  Abstract}
\bigskip
\leftskip10truemm \rightskip10truemm \noindent

In this paper, we discuss some properties on Lucas modules. In details, we show that direct and inverse limits of Lucas modules are  Lucas modules, and every $R$-module has a Lucas envelope and a Lucas cover. Moreover, some properties of direct and inverse limits of Lucas modules and some constructions and the unique mapping properties of Lucas envelopes and Lucas covers are investigated.
\vbox to 0.3cm{}\\
{\it Key Words:} Lucas module; direct limit; inverse limit; envelope;  cover.\\
{\it 2010 Mathematics Subject Classification:} 13A15, 13C12.

\leftskip0truemm \rightskip0truemm
\bigskip
%\section { \bf Introduction    }
%\bigskip

\section{Introduction and Preliminaries}
Throughout this paper, we always assume $R$ is a commutative ring with identity. Recall that an ideal $I$ of $R$ is said to be \emph{dense} if $(0:_RI):=\{r\in R\mid Ir=0\}=0$, and \emph{semi-regular} if there exists a finitely generated dense sub-ideal of $I$. The set of all finitely generated semi-regular ideals of $R$ is denoted by $\Q$.  For a ring $R$, we denote by $\T(R)$ the total quotient ring of $R$. We denote by $\sharp(S)$ the cardinal number of a set $S$.

The notion of Lucas modules was first introduced by Wang et al. \cite{wzcc20} to give an example of total rings with small finitistic dimension larger than zero, which provided a negative solution to the open question in \cite[Problem 1b]{CFFG14}. Actually, they showed that if a ring $R$ has  small finitistic dimension equal to zero, then $R$ satisfies that every $R$-module is a Lucas modules (see \cite[Theorem 3.9, Proposition 3.10]{wzcc20}). Furthermore, the converse was also proved to be true by Wang et al. \cite[Proposition 2.2]{fkxs20}.  For the general case, refer to \cite{z-fpd}. It is well-known that $w$-operation is one of most important star operations for studying commutative rings (see \cite{ywzc11}). For giving a star operation viewpoint  of Lucas modules, Zhou and coauthors \cite{ZDC20} introduced the notion of $q$-operations over commutative rings utilizing finitely generated semi-regular ideals. In 2016, Zhou and coauthors \cite{DF16} studied the direct and inverse limits of $w$-modules. The main motivation of this paper is to  discuss some  properties on direct and inverse limits of Lucas modules,   Lucas envelopes and  Lucas covers. So we will firstly recall the definitions on Lucas modules and $q$-operations. For more details on this topic,
see \cite{wzcc20,fkxs20,z-tq-f-c,ZDC20}.

Let $M$ be an $R$-module. Denote by
 \begin{center}
{\rm $\tor_{\Q}(M):=\{x\in M|Ix=0$, for some $I\in \Q \}.$}
\end{center}
$M$ is said to be \emph{$\Q$-torsion} (resp. \emph{$\Q$-torsion-free}) if $\tor_{\Q}(M)=M$ (resp. $\tor_{\Q}(M)=0$). Note that an $R$-module $M$ is $\Q$-torsion-free if and only if $\Hom_R(R/I,M)=0$ for any $I\in \Q$. Trivially, the class of $\Q$-torsion modules is closed under submodules, quotients and direct sums, and the class of $\Q$-torsion-free modules is closed under submodules and direct products. A $\Q$-torsion-free module $M$ is called a \emph{Lucas module} if $\Ext_R^1(R/I,M)=0$ for any $I\in \Q$. The class of all Lucas modules is denoted by $\Lu$. We have to be aware that $R$ itself may not be a Lucas module. The \emph{Lucas envelope} of a $\Q$-torsion-free module $M$ is given by
\begin{center}
{\rm $M_q:=\{x\in \E_R(M)|Ix\subseteq M$, for some $I\in \Q \},$}
\end{center}
where $\E_R(M)$ is the injective envelope of $M$ as an $R$-module.
\cite[Proposition 2.10]{wzcc20} pointed out that $M$ is a Lucas module if and only if $M_q=M$, and $M_q$ is the minimal Lucas module containing $M$. The following result shows that the Lucas modules over an integral domain are exactly vector spaces over its  quotient field.

\begin{proposition}\label{int-luc} Let $R$ be an integral domain with $Q$ its quotient field.  Then the following statements are equivalent for an $R$-module $M$.
\begin{enumerate}
\item  $M$ is a Lucas module.
\item $M$ is an injective torsion-free $R$-module.
\item $M$ is a vector space over $Q$.
\end{enumerate}
Consequently, if $M$ is a torsion-free module, then $M_q=\E_R(M)\cong M\otimes_RQ$.
\end{proposition}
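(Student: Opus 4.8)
The plan is to prove the equivalence $(1)\Leftrightarrow(2)\Leftrightarrow(3)$ and then derive the consequence. Throughout, the key observation is that over an integral domain $R$, the set $\Q$ of finitely generated semi-regular ideals coincides with the set of nonzero finitely generated ideals. Indeed, in a domain $(0:_R I)=0$ for every nonzero ideal $I$, so every nonzero ideal is dense; a nonzero ideal is semi-regular iff it contains a finitely generated nonzero (hence dense) subideal, which is automatic, and every nonzero finitely generated ideal is itself in $\Q$. Equivalently, the $\Q$-torsion submodule $\tor_\Q(M)$ is exactly the usual torsion submodule $\{x\in M\mid rx=0 \text{ for some } 0\neq r\in R\}$, since if $Ix=0$ for a nonzero finitely generated $I=(a_1,\dots,a_n)$ we may pick any $a_i\neq 0$, and conversely $rx=0$ gives $(r)x=0$ with $(r)\in\Q$. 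So "$\Q$-torsion-free" means "torsion-free" in the classical sense.

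**The main equivalences.**

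For $(1)\Rightarrow(3)$: Assume $M$ is a Lucas module, so $M$ is torsion-free and $\Ext_R^1(R/I,M)=0$ for every nonzero finitely generated ideal $I$, in particular for every nonzero principal ideal $I=(r)$. From the short exact sequence $0\to R\xrightarrow{r} R\to R/(r)\to 0$ one gets the exactness of $\Hom_R(R,M)\xrightarrow{r}\Hom_R(R,M)\to\Ext_R^1(R/(r),M)=0$, i.e. multiplication by $r$ is surjective on $M$; since $M$ is torsion-free, multiplication by $r$ is also injective, hence bijective for every $0\neq r\in R$. Thus $M$ is a module over the localization $R[S^{-1}]$ where $S=R\setminus\{0\}$, that is, a vector space over $Q=\T(R)$. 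For $(3)\Rightarrow(2)$: a $Q$-vector space is torsion-free over $R$, and it is injective over $R$ because $Q$ is a flat (even localization) $R$-module, so $\Hom_R(-,V)=\Hom_Q(-\otimes_R Q, V)$ is exact on $R$-modules as a composite of the exact functors $-\otimes_R Q$ and the exact $\Hom_Q(-,V)$ (every module over a field is injective); alternatively invoke that divisible torsion-free modules over a domain are injective (Baer's criterion). For $(2)\Rightarrow(1)$: an injective module satisfies $\Ext_R^1(N,M)=0$ for all $N$, in particular for $N=R/I$, $I\in\Q$, and torsion-free is assumed, so $M$ is Lucas.

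**The consequence.**

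For the final assertion, let $M$ be torsion-free. The Lucas envelope sits as $M\subseteq M_q\subseteq \E_R(M)$. Since $\E_R(M)$ is injective and torsion-free (an injective module over a domain is divisible, and it is torsion-free because it contains the torsion-free module $M$ as an essential submodule, so any torsion element would generate a torsion submodule meeting $M$ nontrivially, forcing a nonzero torsion element of $M$), it is a Lucas module by the equivalence $(2)\Rightarrow(1)$ just proved. Being a Lucas module, $\E_R(M)=(\E_R(M))_q$, and since $M\subseteq M_q\subseteq \E_R(M)$ with $M_q$ the minimal Lucas module containing $M$ (by \cite[Proposition 2.10]{wzcc20}), we conclude $M_q=\E_R(M)$. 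Finally, $\E_R(M)$ is a $Q$-vector space containing $M$, and the universal property of $M\otimes_R Q$ (localization) together with torsion-freeness of $M$ shows $M\otimes_R Q$ is the smallest $Q$-vector space containing $M$; comparing, $M\otimes_R Q\cong M_q=\E_R(M)$. I expect no serious obstacle here: the only point requiring a little care is verifying that $\E_R(M)$ is torsion-free and that $\Q$ really reduces to the nonzero finitely generated ideals, after which everything follows from standard homological facts about domains.
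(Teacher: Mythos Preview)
Your proof is correct but follows a different cycle of implications than the paper. The paper proves $(1)\Rightarrow(2)$ directly: given any nonzero ideal $I$ (not just a finitely generated one) and $0\neq x\in I$, the exact sequence
\[
0=\Hom_R(I/\langle x\rangle,M)\to\Ext^1_R(R/I,M)\to\Ext^1_R(R/\langle x\rangle,M)=0
\]
(the left term vanishing because $I/\langle x\rangle$ is torsion and $M$ is torsion-free) kills the middle term, so Baer's criterion yields injectivity; $(2)\Leftrightarrow(3)$ is then dispatched by citation. You instead go $(1)\Rightarrow(3)\Rightarrow(2)\Rightarrow(1)$, using only \emph{principal} ideals in $\Q$ to extract divisibility of $M$ from $\Ext^1_R(R/(r),M)=0$, and then the adjunction $\Hom_R(-,V)\cong\Hom_Q(-\otimes_RQ,V)$ for $(3)\Rightarrow(2)$. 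Your route is arguably more self-contained and avoids ever touching non-finitely-generated ideals; the paper's trick, on the other hand, is a nice illustration of how the Lucas condition propagates to all semi-regular ideals.

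One small elision in your consequence: from ``$M_q$ is the minimal Lucas module containing $M$'' and ``$\E_R(M)$ is Lucas'' you only obtain $M_q\subseteq\E_R(M)$, not equality. To close the gap, note that $M_q$, being Lucas, is injective by your equivalence, hence splits off $\E_R(M)$; since $M\subseteq M_q$ and $M$ is essential in $\E_R(M)$, the complementary summand is zero. (More directly: essentiality of $M$ in $\E_R(M)$ already gives, for each nonzero $x\in\E_R(M)$, some $r\neq 0$ with $rx\in M$, so $\langle r\rangle x\subseteq M$ with $\langle r\rangle\in\Q$, whence $x\in M_q$ by definition.)
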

\begin{proof} $(1)\Rightarrow (2)$ Let $M$ be a Lucas module. Since $R$ is an integral domain,  every finitely generated nonzero ideal is in $\Q$.  So an $R$-module is $\Q$-torsion-free if and only if it is torsion-free. Let $I$ be a nonzero ideal of $R$ and $x$ a nonzero element in $I$. Considering the following induced exact sequence $0=\Hom_R(I/\langle x\rangle,M)\rightarrow \Ext_R^1(R/I,M)\rightarrow \Ext_R^1(R/\langle x\rangle,M)=0$, we have $\Ext_R^1(R/I,M)=0$, and hence $M$ is an injective $R$-module.

$(2)\Rightarrow (1)$  is trivial.

$(2)\Leftrightarrow (3)$  follows by \cite[Theorem 1.1.7]{gt}.

Suppose $M$ is a torsion-free module. Then $M$  is $\Q$-torsion-free. So $M_q=\E_R(M)\cong M\otimes_RQ$ by \cite[Proposition 2.10]{wzcc20}.
\end{proof}

\begin{proposition}\label{c-l-p} Let $R$ be a ring. Then a $\Q$-torsion-free $R$-module $M$ is a Lucas module if and only if $\Ext_R^1(T,M)=0$ for any  $\Q$-torsion $R$-module $T$.
\end{proposition}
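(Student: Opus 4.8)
The plan is to prove both implications directly, with the reverse implication being the substantive one. For the ``if'' direction, suppose $\Ext_R^1(T,M)=0$ for every $\Q$-torsion $R$-module $T$. Since any $I\in\Q$ annihilates $R/I$, the module $R/I$ is $\Q$-torsion, so $\Ext_R^1(R/I,M)=0$ for every $I\in\Q$; combined with the standing hypothesis that $M$ is $\Q$-torsion-free, this is precisely the definition of $M$ being a Lucas module.

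For the ``only if'' direction, assume $M$ is a Lucas module and let $T$ be an arbitrary $\Q$-torsion module. The first step is to build a convenient presentation of $T$ by modules of the form $R/I$ with $I\in\Q$ (direct limits are not a good substitute here, since $\Ext^1$ does not commute with them). For each $x\in T$ choose $I_x\in\Q$ with $I_x x=0$, and consider the epimorphism $\pi\colon\bigoplus_{x\in T}R/I_x\twoheadrightarrow T$ sending the generator of the $x$-th summand to $x$. Put $K=\ker\pi$, giving a short exact sequence $0\to K\to\bigoplus_{x\in T}R/I_x\to T\to 0$. Since each $R/I_x$ is $\Q$-torsion and the class of $\Q$-torsion modules is closed under direct sums and submodules, both $\bigoplus_{x\in T}R/I_x$ and $K$ are $\Q$-torsion.

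The second step is to apply $\Hom_R(-,M)$ and read off the relevant piece of the long exact sequence:
$$\Hom_R(K,M)\longrightarrow\Ext_R^1(T,M)\longrightarrow\Ext_R^1\Big(\bigoplus_{x\in T}R/I_x,\,M\Big)\cong\prod_{x\in T}\Ext_R^1(R/I_x,M).$$
The right-hand term vanishes because $M$ is a Lucas module and each $I_x\in\Q$. For the left-hand term, one uses the elementary fact that every homomorphism from a $\Q$-torsion module to a $\Q$-torsion-free module is zero: if $f\colon K\to M$ and $y\in K$ is killed by some $J\in\Q$, then $Jf(y)=f(Jy)=0$, so $f(y)\in\tor_{\Q}(M)=0$. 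Hence $\Hom_R(K,M)=0$, and exactness forces $\Ext_R^1(T,M)=0$, as desired.

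I do not anticipate a serious obstacle; the only points needing care are checking that the kernel $K$ inherits $\Q$-torsionness (immediate from the stated closure properties of the class of $\Q$-torsion modules) and invoking the isomorphism $\Ext_R^1(\bigoplus_i A_i,M)\cong\prod_i\Ext_R^1(A_i,M)$. The conceptual heart of the argument is the one-line observation that $\Hom_R(\text{$\Q$-torsion},\text{$\Q$-torsion-free})=0$, which is already implicit in the excerpt's characterization of $\Q$-torsion-free modules via $\Hom_R(R/I,M)=0$.
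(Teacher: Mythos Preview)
Your proof is correct, and it takes a genuinely different route from the paper. The paper argues the nontrivial direction via a Baer-criterion style extension argument: starting from a free presentation $0\to A\to F\to T\to 0$, it uses Zorn's Lemma to show that every $f:A\to M$ extends to $F$, one element at a time, by first establishing (Claim~1) that $\Ext^1_R(R/I,M)=0$ for \emph{all} semi-regular ideals $I$, not only those in $\Q$. Your argument bypasses all of this by choosing a smarter resolution: presenting $T$ by $\bigoplus_{x\in T}R/I_x$ with each $I_x\in\Q$ lets you kill the outer terms of the long exact sequence immediately, using only the definition of Lucas module, the standard isomorphism $\Ext^1_R(\bigoplus_i A_i,M)\cong\prod_i\Ext^1_R(A_i,M)$, and the observation $\Hom_R(\Q\text{-torsion},\Q\text{-torsion-free})=0$. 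Your approach is shorter and avoids Zorn's Lemma; the paper's approach has the side benefit of producing the explicit extension property (any $A\to M$ lifts along $A\hookrightarrow B$ whenever $B/A$ is $\Q$-torsion) and the upgrade from $\Q$ to arbitrary semi-regular ideals, which may be of independent use.
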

\begin{proof} Suppose a $\Q$-torsion-free $R$-module $M$ satisfies $\Ext_R^1(T,M)=0$ for any  $\Q$-torsion $R$-module $T$. Then $M$ is trivially a Lucas module. Suppose $M$ is a Lucas module.

{\bf Claim 1: that $\Ext_R^1(R/I,M)=0$ for any semi-regular ideal $I$ of $R$.} Note there is a ideal $J\in\Q$ such that $J\subseteq I$. Consider the following induced exact sequence $0=\Hom_R(I/J,M)\rightarrow \Ext_R^1(R/I,M)\rightarrow \Ext_R^1(R/J,M)=0$. So $\Ext_R^1(R/I,M)=0$.

Let $B$ be an $R$-module, $A$ a submodule of $B$ such that $B/A$ is $\Q$-torsion. Let $f:A\rightarrow E$ be an $R$-homomorphism. Set
\begin{center}
 $\Gamma=\{(C,d)|C$ is a submodule of $B$ containing $A$, $d:C\rightarrow E$ is an $R$-homomorphism satisfying $d|_A=f\}.$
\end{center}
Since $(A,f)\in \Gamma$, $\Gamma$ is nonempty. Set $(C_1,d_1)\leq (C_2,d_2)$ if and only if  $C_1\subseteq C_2$ and $d_2|_{C_1}=d_1$. Then $\Gamma$ is a partial order. For every chain $\{(C_j,d_j)\}$, let $C_0=\bigcup\limits_{j}C_j$, and if $c\in C_j$, then $d_0(c)=d_j(c)$. It is easy to verify $(C_0,d_0)$ a upper bound of the chain $\{(C_j,d_j)\}$. So by Zorn Lemma, there is a maximal element $(C,d)$ in $\Gamma$.

{\bf Claim 2:   $C=B$.} For contradiction, let $x\in B-C$. Set  $I=\{r\in R|rx\in C\}$. since $B/A$ is $\Q$-torsion, the sub-quotient $(Rx+C)/C\cong R/I$ is also  $\Q$-torsion. Hence $I$ is semi-regular ideal of $R$. Let $h:I\rightarrow E$ be an $R$-homomorphism satisfying $h(r)=d(rx)$. Then there is an $R$-homomorphism $g: R\rightarrow E$ such that $g(r)=h(r)=d(rx)$ $(\forall r\in I)$ by Claim 1. Let  $C_1=C+Rx$ and $d_1(c+rx)=d(c)+g(r)$, where $c\in C$ and $r\in R$.  If $c+rx=0$, then $r\in I$. So $d(c)+g(r)=d(c)+h(r)=d(c)+d(rx)=d(c+rx)=0$. Hence  $d_1$ is well-defined and $d_1|_A=f$. So $(C_1,d_1)\in  \Gamma$. However,  $(C_1,d_1)> (C,d)$ which is a contradiction to the maximality of  $(C,d)$.

Let $T$ be a $\Q$-torsion $R$-module. Consider the exact sequence $0\rightarrow A\xrightarrow{i} F\rightarrow T\rightarrow 0$ where $F$ is free. Then we have a long exact  sequence $\Hom_R(F,M)\xrightarrow{\Hom_R(i,M)} \Hom_R(A,M)\rightarrow \Ext^1_R(T,M)\rightarrow \Ext^1_R(F,M)=0$. By Claim 2, $\Hom_R(i,M)$ is an epimorphism. So  $\Ext^1_R(T,M)=0$.
\end{proof}

\begin{proposition}\label{l-p} Let $R$ be a ring. Then the following statements hold.
\begin{enumerate}
\item  every direct product $\prod M_i$ of $R$-modules is a Lucas module if and only if each $M_i$ is a Lucas module.
\item every pure submodule of a Lucas module is a Lucas module.
\end{enumerate}
\end{proposition}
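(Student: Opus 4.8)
The plan is to reduce both statements to the characterization established in Proposition \ref{c-l-p}, namely that a $\Q$-torsion-free module $M$ is Lucas if and only if $\Ext_R^1(T,M)=0$ for every $\Q$-torsion module $T$. For part (1), one direction is immediate: the class of Lucas modules is closed under direct products because the class of $\Q$-torsion-free modules is (as noted in the preliminaries) and $\Ext_R^1(R/I,-)$ commutes with products. For the converse, if $\prod M_i$ is Lucas, then each $M_i$ is $\Q$-torsion-free (a submodule of a $\Q$-torsion-free module), and since $\prod M_i$ is Lucas we have $\Ext_R^1(R/I,\prod M_i)\cong\prod_i\Ext_R^1(R/I,M_i)=0$ for every $I\in\Q$, forcing each factor to vanish; hence each $M_i$ is Lucas.

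For part (2), let $N$ be a pure submodule of a Lucas module $M$. First I would check that $N$ is $\Q$-torsion-free: this is automatic since $N\subseteq M$ and the class of $\Q$-torsion-free modules is closed under submodules. Then, using Proposition \ref{c-l-p}, it suffices to show $\Ext_R^1(T,N)=0$ for every $\Q$-torsion module $T$. I would present $T$ by a short exact sequence $0\to A\to F\to T\to 0$ with $F$ free, apply $\Hom_R(-,N)$ and $\Hom_R(-,M)$, and compare the resulting long exact sequences. Since $M$ is Lucas, $\Hom_R(F,M)\to\Hom_R(A,M)$ is surjective; so the goal is to show $\Hom_R(F,N)\to\Hom_R(A,N)$ is surjective as well. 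Given $f\colon A\to N$, compose with the inclusion $N\hookrightarrow M$ to get $\bar f\colon A\to M$, which extends to $g\colon F\to M$. The issue is that $g$ need not land in $N$.

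This is where purity enters, and it is the main obstacle. The standard move is to dualize: since $N\subseteq M$ is pure, for any finitely presented module $R/I$ with $I\in\Q$ the map $\Hom_R(R/I,M/N)$ fits into an exact sequence showing $\Ext_R^1(R/I,N)\to\Ext_R^1(R/I,M)$ is injective (equivalently $\Hom_R(R/I,M)\to\Hom_R(R/I,M/N)$ is surjective, which is exactly purity tested against $R/I$). Concretely, from $0\to N\to M\to M/N\to 0$ and the long exact sequence $\Hom_R(R/I,M)\to\Hom_R(R/I,M/N)\to\Ext_R^1(R/I,N)\to\Ext_R^1(R/I,M)$, purity gives surjectivity of the first map, hence $\Ext_R^1(R/I,N)\hookrightarrow\Ext_R^1(R/I,M)=0$, so $\Ext_R^1(R/I,N)=0$ for all $I\in\Q$; that already shows $N$ is Lucas directly from the definition, bypassing the need to handle arbitrary $\Q$-torsion $T$. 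The one point to be careful about is that purity is usually phrased via tensoring with finitely presented modules or via the embedding splitting after tensoring; I would invoke the equivalent form that $0\to N\to M\to M/N\to 0$ stays exact after applying $\Hom_R(P,-)$ on the relevant Ext-term for $P$ finitely presented, or simply cite the standard characterization of pure exact sequences by $\Ext^1(P,-)$-injectivity for finitely presented $P$ (e.g.\ the $\lambda$-pure/pure-exact equivalence in \cite{gt}). Each $R/I$ with $I\in\Q$ is finitely presented since $I$ is finitely generated, so the argument applies.
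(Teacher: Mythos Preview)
Your proposal is correct and, once you arrive at the final argument for (2), it is essentially the paper's proof: both use that $R/I$ is finitely presented for $I\in\Q$, so purity of $0\to N\to M\to M/N\to 0$ yields surjectivity of $\Hom_R(R/I,M)\to\Hom_R(R/I,M/N)$ and hence the injection $\Ext_R^1(R/I,N)\hookrightarrow\Ext_R^1(R/I,M)=0$. Your initial detour through Proposition~\ref{c-l-p} and arbitrary $\Q$-torsion $T$ is unnecessary---you can go straight to the finitely-presented-$R/I$ argument from the start, as the paper does in one line.
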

\begin{proof} (1) Obvious.

(2) Let $I$ be a finitely generated semi-regular ideal of $R$, $M$ a Lucas module and $N$ a pure submodule of $M$. Then trivially, $N$ is $\Q$-torsion-free. Since $R/I$ is finitely presented, we have an exact sequence $0\rightarrow \Ext_R^1(R/I,N)\rightarrow \Ext_R^1(R/I,M)=0.$ So $\Ext_R^1(R/I,N)=0$. Hence $N$ is also a Lucas module.
\end{proof}

Recall from \cite{ZDC20} that a submodule $N$ of a  $\Q$-torsion free module $M$ is called a $q$-submodule if $N_q\cap M=N$. If an ideal $I$ of $R$ is a $q$-submodule of $R$, then $I$ is also called a $q$-ideal of $R$. A \emph{maximal $q$-ideal} is an ideal of $R$ which is maximal among the $q$-submodules of $R$. The set of all maximal $q$-ideals is denoted by $q$-$\Max(R)$. $q$-$\Max(R)$ is exactly the set of all maximal non-semi-regular ideals of $R$, and thus is non-empty and a subset of $\Spec(R)$ (see
\cite[Proposition 2.5, Proposition 2.7]{ZDC20}).

An $R$-homomorphism $f:M\rightarrow N$ is called to be  a \emph{$\tau_q$-monomorphism} (resp., \emph{$\tau_q$-epimorphism}, \emph{$\tau_q$-isomorphism}) provided that $f_\m:M_\m\rightarrow N_\m$ is a monomorphism (resp., an epimorphism, an isomorphism) over $R_\m$ for any $\m\in q$-$\Max(R)$. By \cite[Proposition 2.7(5)]{ZDC20}, an $R$-homomorphism $f:M\rightarrow N$ is a $\tau_q$-monomorphism (resp., $\tau_q$-epimorphism, $\tau_q$-isomorphism) if  and only if $\Ker(f)$ is  (resp.,  $\Cok(f)$ is, both $\Ker(f)$ and $\Cok(f)$ are) $\Q$-torsion. A sequence of $R$-modules $A\xrightarrow{f} B\xrightarrow{g} C$ is said to be  \emph{$\tau_q$-exact} provided that $A_\m\rightarrow B_\m\rightarrow C_\m$ is  exact as  $R_\m$-modules for any  $\m\in q$-$\Max(R)$. Following from \cite[Proposition 2.4]{z-tq-f-c} that an $R$-sequence $A\rightarrow B\rightarrow C$ is $\tau_q$-exact if and only if  $A\otimes_R\T(R[x])\rightarrow B\otimes_R\T(R[x])\rightarrow C\otimes_R\T(R[x])$ is  $\T(R[x])$-exact. Let $M$ be an $R$-module. Then
\begin{enumerate}
\item   $M$ is said to be \emph{$\tau_q$-finitely generated} provided that there exists  a $\tau_q$-exact sequence $F\rightarrow M\rightarrow 0$ with $F$ finitely generated free;

\item  $M$ is said to be \emph{$\tau_q$-finitely presented} provided that there exists a $\tau_q$-exact sequence $$ F_1\rightarrow F_0\rightarrow M\rightarrow 0$$ such that $F_0$ and $F_1$ are finitely generated free modules.
\end{enumerate}
Following from \cite[Theorem 3.3]{z-tq-f-c} that an $R$-module  $M$ is $\tau_q$-finitely generated (resp., $\tau_q$-finitely  presented) if and only if $M\otimes_R\T(R[x])$ is finitely generated (resp., finitely  presented)  over $\T(R[x])$.

\section{the inverse limits and enveloping properties of Lucas modules}

First, we will study the inverse limit of Lucas modules.
\begin{theorem}\label{invse lim lucas} Let $\{M_i, \varphi_{i,j}\mid i,j\in \Gamma\}$ be an inverse system of Lucas  modules over $R$. Then $\lim\limits_{\longleftarrow}M_i$ is also a Lucas module.
\end{theorem}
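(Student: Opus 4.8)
The plan is to verify directly that $M := \varprojlim M_i$ satisfies the two defining conditions of a Lucas module: that it is $\Q$-torsion-free, and that $\Ext_R^1(R/I, M) = 0$ for every $I \in \Q$. Both will follow from the fact that the inverse limit is a left-exact functor, together with the observation that $\Q$-torsion vanishing is inherited through the defining exact sequence.

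\medskip

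\textbf{Step 1: $\Q$-torsion-freeness.} Recall $\varprojlim M_i$ is the submodule of $\prod_i M_i$ consisting of compatible tuples. Since the class of $\Q$-torsion-free modules is closed under direct products and submodules (as noted in the preliminaries), and each $M_i$ is $\Q$-torsion-free, $\prod_i M_i$ is $\Q$-torsion-free, hence so is $M$.

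\medskip

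\textbf{Step 2: the Ext-vanishing.} Fix $I \in \Q$. I would like to say ``$\Ext_R^1(R/I, -)$ commutes with inverse limits,'' but this is false in general, so instead I will use the left-exactness of $\varprojlim$ together with the $\varprojlim^1$ exact sequence, or — more cleanly — argue via Proposition~\ref{c-l-p} after reducing to showing $\Hom_R(-, M)$ is exact on a suitable short exact sequence. Concretely: since $R/I$ is finitely presented, pick a short exact sequence $0 \to K \to R^n \to R/I \to 0$ with $K$ finitely generated. Applying $\Hom_R(-, M_i)$ and using $\Ext_R^1(R/I, M_i) = 0$ gives that $\Hom_R(R^n, M_i) \to \Hom_R(K, M_i)$ is surjective for every $i$. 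Now take the inverse limit over $i$. Since $\Hom_R(P, -)$ commutes with inverse limits for any module $P$ (as $\Hom_R(P, \varprojlim M_i) = \varprojlim \Hom_R(P, M_i)$, $\varprojlim$ being a limit and $\Hom$ preserving limits in the second variable), we get the diagram $\Hom_R(R^n, M) \to \Hom_R(K, M)$ sitting between the limits. The surjectivity of $\Hom_R(R^n, M) \to \Hom_R(K, M)$ will follow provided the relevant $\varprojlim^1$ term vanishes — namely $\varprojlim^1 \Hom_R(R/I, M_i)$, which fits into $0 \to \varprojlim \Hom_R(R/I, M_i) \to \varprojlim \Hom_R(R^n, M_i) \to \varprojlim \Hom_R(K, M_i) \to \varprojlim^1 \Hom_R(R/I, M_i) \to \varprojlim^1 \Hom_R(R^n, M_i)$. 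But $\Hom_R(R/I, M_i) = 0$ since $M_i$ is $\Q$-torsion-free, so this $\varprojlim^1$ term is zero, and the surjectivity follows. Hence $\Ext_R^1(R/I, M) = \coker(\Hom_R(R^n, M) \to \Hom_R(K, M)) = 0$.

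\medskip

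\textbf{The main obstacle} will be handling the $\varprojlim^1$ obstruction carefully — one cannot naively commute $\Ext^1$ past the inverse limit. The key insight that makes it work is precisely that each $M_i$ is $\Q$-torsion-free, which forces $\Hom_R(R/I, M_i) = 0$ and kills the problematic $\varprojlim^1$ term; without the torsion-free hypothesis the argument would break. If one prefers to avoid $\varprojlim^1$ entirely, an alternative is to invoke the general fact that for any directed system the functor $\varprojlim$ is left exact and to note that $\Ext_R^1(R/I, -)$ can be computed as a cokernel of a $\Hom$-map into $M$ (using that $R/I$ has a length-two presentation by finitely generated frees), so the whole computation takes place among $\Hom$-groups, each of which commutes with $\varprojlim$; the surjectivity at the end is then exactly the $\varprojlim^1$-vanishing packaged differently. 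Either way, once Step 2 is in place, $M$ is $\Q$-torsion-free with $\Ext_R^1(R/I, M) = 0$ for all $I \in \Q$, so $M$ is a Lucas module. $\hfill\square$
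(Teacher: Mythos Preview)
Your proof is correct, but it takes a different route from the paper's. The paper argues directly at the level of elements: given $f:I\to\varprojlim M_i$, it first extends $\lambda\circ f$ to a map $h:R\to\prod_i M_i$ (using that $\prod_i M_i$ is Lucas, Proposition~\ref{l-p}), and then shows $h(1)=(m_i)$ actually lies in $\varprojlim M_i$ by noting that $I(m_i-\varphi_{i,j}(m_j))=0$ forces $m_i=\varphi_{i,j}(m_j)$ by $\Q$-torsion-freeness of $M_i$. Your approach instead works homologically: you compute $\Ext^1_R(R/I,M)$ as the cokernel of $\Hom_R(R^n,M)\to\Hom_R(K,M)$, pass the $\Hom$-groups through $\varprojlim$, and kill the obstruction via $\Hom_R(R/I,M_i)=0$.

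Both arguments hinge on exactly the same key fact, namely that $\Q$-torsion-freeness forces $\Hom_R(R/I,M_i)=0$; the paper uses it to show compatibility of a lifted element, you use it to kill a $\varprojlim^1$ term. One remark: your invocation of $\varprojlim^1$ is heavier than necessary. Since $\Hom_R(R/I,M_i)=0$ \emph{and} $\Ext^1_R(R/I,M_i)=0$, the restriction map $\Hom_R(R^n,M_i)\to\Hom_R(K,M_i)$ is already an \emph{isomorphism} at every level, so applying the functor $\varprojlim$ directly gives an isomorphism $\Hom_R(R^n,M)\to\Hom_R(K,M)$ with no derived-functor sequence needed. This also sidesteps any worry about whether the $\varprojlim^1$ six-term sequence is available over a general (possibly uncountable, nonfiltered) index poset $\Gamma$. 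The paper's element-level argument is slightly more self-contained; your version generalises more readily and makes the homological structure transparent.
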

\begin{proof} Let $I\in \Q$ and $f:I\rightarrow \lim\limits_{\longleftarrow}M_i$ be an $R$-homomorphism. It is sufficient to show that $f$ can be extended to some  $R$-homomorphism $g:R\rightarrow \lim\limits_{\longleftarrow}M_i$. Let $\lambda:\lim\limits_{\longleftarrow}M_i\hookrightarrow \prod M_i$ be the natural embedding map.
By Proposition \ref{l-p}, $\prod M_i$ is also a Lucas module. So there is an $R$-homomorphism $h:R\rightarrow \prod M_i$ such that the following diagram is commutative:

$$\xymatrix@R=17pt@C=30pt{
&\prod M_i&\\
&\lim\limits_{\longleftarrow}M_i\ar@{^{(}->}[u]^{\lambda}&\\
0\ar[r]&I\ar[u]^{f}\ar[r]^{i}&R\ar@{-->}[luu]_{h}.\\
}$$
Set $h(1)=(m_i)\in \prod M_i$. We claim that $h(1)\in \lim\limits_{\longleftarrow}M_i$. Indeed, let $r\in I$. Then we have $f(r)=h(r)=rh(1)=r(m_i)\in\lim\limits_{\longleftarrow}M_i$. Then we have $rm_i=\varphi_{i,j}(rm_j)=r\varphi_{i,j}(m_j)$ for any $i\leq j$. So $I(m_i-\varphi_{i,j}(m_j))=0\in M_i$. Since $M_i$ is $\Q$-torsion-free, $m_i=\varphi_{i,j}(m_j)$ for any $i\leq j$. It follows from \cite[Theorem 1.5.10]{EJ00} that $h(1)\in \lim\limits_{\longleftarrow}M_i$. Hence $\Im(h)\subseteq \lim\limits_{\longleftarrow}M_i$. Setting $g:=h$, we have $f=h\circ i=g\circ i$.
\end{proof}

Let  $\mathcal{F}$ be a class of $R$-modules and $M$ an $R$-module. If there is a homomorphism $f: M\rightarrow F$ with  $F\in \mathcal{F}$ such that $\Hom_R(F,F')\rightarrow \Hom_R(M,F')$ is an epimorphism for any $F'\in \mathcal{F}$, then we say $M$ has an  $\mathcal{F}$-preenvelope and $f: M\rightarrow F$ is said to be an  $\mathcal{F}$-preenvelope of $M$. If, moreover, every endomorphism $h$ such that $f=h\circ f$  is an isomorphism,  then $f: M\rightarrow F$ is said to be an  $\mathcal{F}$-envelope of $M$.  Let $f: M\rightarrow F$  be an $\mathcal{F}$-envelope of $M$. If for any $f':M\rightarrow F'$ with $F'\in\mathcal{F}$, there exists a unique $g:F\rightarrow F'$ such that $f'=g\circ f$. Then $f$ is said to have the unique mapping property.

\begin{lemma}\cite[Lemma 5.3.12]{EJ00}\label{210}
{\rm
Let $M$ and $N$ be $R$-modules. Then there is a cardinal number $\kappa_{\alpha}$ dependent on $\sharp(N)$ and  $\sharp(R)$ such that for any morphism  $f: N\rightarrow M$, there is a pure submodule $S$ of $M$ such that $f(N)\subseteq S$ and  $\sharp (S)\leq \kappa_{\alpha}$.}
\end{lemma}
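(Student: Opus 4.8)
The plan is to prove this by a L\"owenheim--Skolem style closure argument built on the classical characterization of purity via finite systems of linear equations: a submodule $S$ of $M$ is pure in $M$ if and only if every finite system $\sum_{j} r_{ij}x_j = s_i$ ($1\le i\le m$) with $r_{ij}\in R$ and $s_i\in S$ that is solvable in $M$ is already solvable in $S$. First I would set $\mu := \max\{\sharp(N),\sharp(R),\aleph_0\}$ and observe that the submodule $S_0$ of $M$ generated by $f(N)$ contains $f(N)$ and satisfies $\sharp(S_0)\le\mu$, since its elements are finite $R$-linear combinations of elements of $f(N)$.

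Next I would build an increasing chain $S_0\subseteq S_1\subseteq\cdots$ of submodules of $M$, each of cardinality $\le\mu$, by the following recursion. Given $S_n$ with $\sharp(S_n)\le\mu$, the collection $\Sigma_n$ of all finite linear systems with coefficients in $R$, constants in $S_n$, and at least one solution in $M$ has cardinality $\le\mu$ (such a system is coded by a pair of natural numbers together with finitely many elements of $R$ and finitely many elements of $S_n$, and $\mu$ is infinite). Using the axiom of choice, pick one solution in $M$ for each system in $\Sigma_n$, collect all coordinates of these chosen solutions into a set $T_n$ with $\sharp(T_n)\le\mu$, and let $S_{n+1}$ be the submodule of $M$ generated by $S_n\cup T_n$, so that $\sharp(S_{n+1})\le\mu$. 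Put $S:=\bigcup_{n\ge 0}S_n$. Then $f(N)\subseteq S_0\subseteq S$ and $\sharp(S)\le\aleph_0\cdot\mu=\mu$, so taking $\kappa_\alpha:=\mu$ yields the required cardinal depending only on $\sharp(N)$ and $\sharp(R)$.

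It then remains to verify that $S$ is pure in $M$. Given a finite system $\sum_j r_{ij}x_j = s_i$ ($1\le i\le m$) with all $s_i\in S$ that is solvable in $M$: since the chain $(S_n)_n$ is increasing and there are only finitely many constants $s_1,\dots,s_m$, they all lie in some single $S_n$; hence the system belongs to $\Sigma_n$, and the solution chosen for it has all its coordinates in $T_n\subseteq S_{n+1}\subseteq S$. Thus the system is solvable in $S$, so $S$ is pure in $M$, which finishes the proof. The step requiring the most care will be the bookkeeping in this last paragraph together with the cardinal estimates --- in particular, the use of the countability of the chain to ensure that any finite tuple of constants from $S$ already occurs at a finite stage, and the check that at each step the number of systems, and hence of adjoined generators, does not exceed $\mu$. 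The remaining verifications are routine.
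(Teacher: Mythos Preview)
Your argument is correct: this is the standard L\"owenheim--Skolem closure construction, and the cardinal bookkeeping and the verification of purity via finite linear systems are carried out properly. Note that the paper does not supply its own proof of this lemma at all---it is quoted verbatim from \cite[Lemma~5.3.12]{EJ00}---so there is nothing to compare; your write-up is essentially the proof one finds in that reference.
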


\begin{lemma}\cite[Corollary 6.2.2]{EJ00}\label{211}
{\rm
Let $\sharp (M)=\kappa_{\beta}$. Suppose there is an infinite cardinal $\kappa_{\alpha}$ such that if $F\in \mathcal{F}$ and $S\subseteq F$ is a submodule with $\sharp (S)\leq \kappa_{\beta}$, there is a submodule $G$ of $F$ containing $S$ with $G\in \mathcal{F}$ and $\sharp (G)\leq \kappa_{\alpha}$. Then $M$ has an $\mathcal{F}$-preenvelope.}
\end{lemma}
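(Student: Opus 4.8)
The plan is to build a single morphism out of $M$ through which every morphism into a module of $\mathcal{F}$ factors, by taking the product of a carefully chosen \emph{set} of ``small'' morphisms. Write $\kappa_\beta=\sharp(M)$ and let $\kappa_\alpha$ be the cardinal supplied by the hypothesis.

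First I would show that every morphism from $M$ into a module of $\mathcal{F}$ factors through a small one. Given $h\colon M\to F'$ with $F'\in\mathcal{F}$, the image $h(M)$ is a submodule of $F'$ with $\sharp(h(M))\le\sharp(M)=\kappa_\beta$, so applying the hypothesis to $S=h(M)\subseteq F'$ yields a submodule $G$ of $F'$ with $h(M)\subseteq G$, $G\in\mathcal{F}$ and $\sharp(G)\le\kappa_\alpha$; thus $h$ factors as $M\xrightarrow{h_0}G\hookrightarrow F'$. Next I would fix a set $\mathcal{S}$ of representatives of the isomorphism classes of modules in $\mathcal{F}$ of cardinality $\le\kappa_\alpha$ — this is a legitimate set, since any such module is a module structure on a subset of a fixed set of cardinality $\kappa_\alpha$ and there is only a set of such structures — and for each $G\in\mathcal{S}$ note that $\Hom_R(M,G)$ is again a set. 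Indexing the resulting family by $J$, I obtain a set of morphisms $\{g_j\colon M\to G_j\mid j\in J\}$. Setting $F:=\prod_{j\in J}G_j$ and letting $f\colon M\to F$ have components the $g_j$, closure of $\mathcal{F}$ under direct products (which is part of the ambient hypotheses of \cite{EJ00}, and holds for $\mathcal{F}=\Lu$ by Proposition \ref{l-p}) gives $F\in\mathcal{F}$.

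Finally I would verify that $f$ is an $\mathcal{F}$-preenvelope. For any $h\colon M\to F'$ with $F'\in\mathcal{F}$, the first step writes $h=\iota\circ h_0$ with $\iota\colon G\hookrightarrow F'$, $G\in\mathcal{F}$ and $\sharp(G)\le\kappa_\alpha$; choosing an isomorphism $\theta\colon G\to G_j$ with $G_j\in\mathcal{S}$, and then the index $j\in J$ so that $g_j=\theta\circ h_0$, one obtains $h=\iota\circ\theta^{-1}\circ g_j=(\iota\circ\theta^{-1}\circ\pi_j)\circ f$, where $\pi_j\colon F\to G_j$ is the canonical projection. Hence $\Hom_R(F,F')\to\Hom_R(M,F')$ is surjective, which is exactly the definition of an $\mathcal{F}$-preenvelope.

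The main obstacle is purely set-theoretic: ensuring that the totality of ``small'' morphisms out of $M$ genuinely forms a set, so that the product defining $F$ makes sense — this is precisely what the cardinality hypothesis buys us. One must also be slightly careful in the last step to choose the index $j$ labelling the morphism $\theta\circ h_0$ itself rather than merely the module $G_j$; everything else in the argument is formal.
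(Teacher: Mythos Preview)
The paper does not give its own proof of this lemma: it is quoted verbatim from \cite[Corollary 6.2.2]{EJ00} and used as a black box. Your argument is correct and is exactly the standard proof from that reference---factor every morphism $M\to F'$ through a ``small'' member of $\mathcal{F}$ using the cardinality hypothesis, collect a \emph{set} of representatives of such small targets together with all maps into them, and take the product. Your remark that closure of $\mathcal{F}$ under direct products is an implicit standing hypothesis in \cite{EJ00} (and is satisfied for $\Lu$ by Proposition~\ref{l-p}) is on point and worth making explicit, since the paper's restatement of the lemma omits it.
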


\begin{lemma}\cite[Corollary 6.3.5]{EJ00}\label{212}
{\rm
Let $\mathcal{F}$ be a class of $R$-modules that is closed under direct summands and inverse limits, and $M$ be an $R$-module. If $M$ has an $\mathcal{F}$-preenvelope, then it has an $\mathcal{F}$-envelope.}
\end{lemma}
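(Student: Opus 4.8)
The plan is to produce an $\mathcal{F}$-envelope out of a given $\mathcal{F}$-preenvelope by passing to a suitably \emph{minimal} one, with the two hypotheses doing complementary work: closure under inverse limits lets me take ``intersections'' of descending chains and build certain limit objects, while closure under direct summands lets me cut a codomain down along idempotents and remain inside $\mathcal{F}$. Concretely, I would fix an $\mathcal{F}$-preenvelope $\phi\colon M\to F$ (so $F\in\mathcal{F}$) and introduce the poset
$$ P=\bigl\{\,G\subseteq F\ \text{submodule}\ \bigm|\ \phi(M)\subseteq G,\ G\in\mathcal{F},\ \text{and}\ \phi\ \text{corestricts to an}\ \mathcal{F}\text{-preenvelope}\ M\to G\,\bigr\}, $$
ordered by reverse inclusion; it is nonempty since $F\in P$. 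Given a chain $\{G_{i}\}$ in $P$, the submodule $\bigcap_{i}G_{i}$ is the inverse limit of $\{G_{i},\ \text{inclusions}\}$, hence lies in $\mathcal{F}$; it contains $\phi(M)$; and $\phi$ still corestricts to a preenvelope onto it, because any $g\colon F\to F'$ (with $F'\in\mathcal{F}$) factoring a given $f\colon M\to F'$ through $\phi$ restricts to $\bigcap_{i}G_{i}$ and still factors $f$. So every chain has a lower bound, and Zorn's Lemma yields a minimal $G^{\ast}\in P$; put $\psi=\phi|^{G^{\ast}}\colon M\to G^{\ast}$.

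It then remains to prove that $\psi$ is an $\mathcal{F}$-envelope, i.e.\ that every $h\colon G^{\ast}\to G^{\ast}$ with $h\psi=\psi$ is an automorphism. First, $G^{\ast}$ admits no nontrivial $\psi$-fixing idempotent: if $e^{2}=e$ and $e\psi=\psi$, then $e(G^{\ast})$ is a direct summand of $G^{\ast}$, hence lies in $\mathcal{F}$ by the summand hypothesis, contains $\psi(M)$, and receives a corestricted preenvelope $M\to e(G^{\ast})$ (compose a factorization through $\psi$ with $e$), so $e(G^{\ast})\in P$ and minimality forces $e(G^{\ast})=G^{\ast}$, i.e.\ $e=\mathrm{id}$. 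Next, for an \emph{injective} $h$ with $h\psi=\psi$, the inverse limit $T=\varprojlim\bigl(G^{\ast}\xleftarrow{\,h\,}G^{\ast}\xleftarrow{\,h\,}\cdots\bigr)$ lies in $\mathcal{F}$ and, by injectivity of $h$, is identified via the first projection with the submodule $\bigcap_{n}h^{n}(G^{\ast})\subseteq G^{\ast}$; it contains $\psi(M)$ since $h^{n}\psi=\psi$, and $\psi$ corestricts to a preenvelope onto it (compose factorizations through $\psi$ with $T\to G^{\ast}$). Hence $\bigcap_{n}h^{n}(G^{\ast})\in P$, so minimality gives $\bigcap_{n}h^{n}(G^{\ast})=G^{\ast}$, i.e.\ $h$ is onto, and an injective surjection is an automorphism. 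A \emph{non-injective} $h$ I would dispose of by a further transfinite descent: the ascending chain $\Ker h\subseteq\Ker h^{2}\subseteq\cdots$ meets $\psi(M)$ trivially, and iterating ``cut down along a $\psi$-fixing idempotent'' together with passing to inverse limits (as above) should, against minimality of $G^{\ast}$, force $\Ker h=0$, reducing to the injective case.

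I expect this last point — eliminating non-injective $\psi$-fixing endomorphisms — to be the main obstacle, since it requires converting the inverse-limit factorization $\pi\rho\colon G^{\ast}\to G^{\ast}$ (a $\psi$-fixing map with image inside $\bigcap_{n}h^{n}(G^{\ast})$) and the kernel chain $\{\Ker h^{n}\}$ into genuine $\psi$-compatible direct-summand decompositions of $G^{\ast}$ to which minimality can be applied; this is exactly where closure under direct summands is indispensable, just as closure under inverse limits is what powers both the Zorn step and the construction of $T$. The remaining points — that corestrictions of $\phi$ onto members of $\mathcal{F}$ stay preenvelopes, and the formal bookkeeping of these towers — are routine.
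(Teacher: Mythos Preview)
The paper does not prove this lemma; it is quoted from \cite[Corollary~6.3.5]{EJ00} without argument, so there is no in-paper proof to compare against. I can only assess your sketch on its own terms.

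Your Zorn step and the injective case are sound (and the preenvelope clause in the definition of $P$ is actually redundant: any $\mathcal F$-submodule of $F$ containing $\phi(M)$ automatically receives a corestricted preenvelope, by restricting a factorization through $\phi$). The genuine gap is precisely where you flag it: disposing of a non-injective $\psi$-fixing $h\colon G^{\ast}\to G^{\ast}$. Your only mechanisms for producing a strictly smaller member of $P$ are (i) intersections of chains and (ii) images of $\psi$-fixing idempotents, and neither applies. The image $h(G^{\ast})$ is a quotient of $G^{\ast}$, not a summand, so need not lie in $\mathcal F$; and once $h$ is not injective, the inverse limit $T=\varprojlim(G^{\ast}\xleftarrow{h}G^{\ast}\leftarrow\cdots)$ no longer embeds in $G^{\ast}$ via its first projection, so it does not hand you an element of $P$. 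The map $\pi\rho\colon G^{\ast}\to G^{\ast}$ you mention (with $\rho\colon G^{\ast}\to T$ obtained from the preenvelope property) is indeed $\psi$-fixing, but nothing forces it, or anything built from the kernel filtration $\{\Ker h^{n}\}$, to be idempotent; your proposed ``transfinite descent'' therefore has no engine to run on.

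The argument in Enochs--Jenda is organized differently and this difference is essential: one does not minimize among submodules of a fixed $F$, but instead iterates transfinitely the replacement $G\rightsquigarrow\varprojlim(G\xleftarrow{h}G\leftarrow\cdots)$ along a chosen non-automorphic $\psi$-fixing $h$, taking inverse limits at limit ordinals and using a cardinality bound to force stabilization. The resulting preenvelope $\psi\colon M\to G$ has the property that every $\psi$-fixing endomorphism is a split surjection; only then does closure under direct summands enter, via exactly your idempotent observation, to upgrade ``split surjection'' to ``automorphism''. The missing idea in your sketch is thus to let the target \emph{change} as an abstract $\mathcal F$-object along inverse limits, rather than to shrink it inside the original $F$.
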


\begin{proposition} Let $\Lu$  be the class of all Lucas modules over a ring $R$. Then every $R$-module has an $\Lu$-envelope.
\end{proposition}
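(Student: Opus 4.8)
The plan is to verify the hypotheses of Lemma \ref{212} applied to $\mathcal{F}=\Lu$. That lemma requires two things: that $\Lu$ be closed under direct summands and inverse limits, and that every $R$-module have an $\Lu$-preenvelope. Closure under inverse limits is exactly Theorem \ref{invse lim lucas}, and closure under direct summands is immediate from Proposition \ref{l-p}(1) (a direct summand is in particular a direct factor of a direct product of copies of itself, or one simply notes that a summand of a Lucas module is pure, hence Lucas by Proposition \ref{l-p}(2)). So the real work is producing the $\Lu$-preenvelope, for which I would invoke Lemma \ref{211}.

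To apply Lemma \ref{211} to $M$ with $\sharp(M)=\kappa_\beta$, I need an infinite cardinal $\kappa_\alpha$ such that whenever $F$ is a Lucas module and $S\subseteq F$ is a submodule with $\sharp(S)\leq\kappa_\beta$, there is a Lucas submodule $G$ with $S\subseteq G\subseteq F$ and $\sharp(G)\leq\kappa_\alpha$. The natural choice of $G$ is the Lucas envelope computed inside $F$: since $F$ is $\Q$-torsion-free, so is $S$, and one can form $S_q$. The subtlety is that $S_q$ is a priori defined inside the injective envelope $\E_R(S)$, not inside $F$; but because $F$ is itself a Lucas module containing $S$, the minimality property of $S_q$ from \cite[Proposition 2.10]{wzcc20} gives an embedding $S_q\hookrightarrow F$ extending $S\hookrightarrow F$, so we may regard $S_q$ as a submodule $G$ of $F$. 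It is Lucas by construction, and it contains $S$. So the only remaining point is the cardinality bound.

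For the cardinality estimate, I would first apply Lemma \ref{210}: there is a cardinal $\kappa_\alpha$ depending only on $\sharp(S)$ (hence only on $\kappa_\beta$) and $\sharp(R)$ such that $S$ is contained in a pure submodule $T$ of $F$ with $\sharp(T)\leq\kappa_\alpha$. By Proposition \ref{l-p}(2), $T$ is again a Lucas module; and $S\subseteq T\subseteq F$. Thus we may simply take $G=T$: it is Lucas, contains $S$, sits inside $F$, and has cardinality at most $\kappa_\alpha$. This sidesteps any need to bound $\sharp(S_q)$ directly. Enlarging $\kappa_\alpha$ to be infinite if necessary, the hypotheses of Lemma \ref{211} are met, so $M$ has an $\Lu$-preenvelope, and then Lemma \ref{212} upgrades this to an $\Lu$-envelope.

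The step I expect to be the main obstacle is the bookkeeping around realizing the Lucas closure of a small submodule \emph{inside the given ambient Lucas module} $F$ while controlling its size — that is, making sure the closure operation does not blow up the cardinality and does not leave $F$. The pure-submodule route via Lemma \ref{210} together with Proposition \ref{l-p}(2) is what resolves this cleanly, so the proof reduces to citing those two results plus Theorem \ref{invse lim lucas} and feeding everything into Lemmas \ref{211} and \ref{212}.
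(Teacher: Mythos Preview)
Your proposal is correct and follows essentially the same route as the paper: use Lemma~\ref{210} to trap $S$ in a small pure submodule of $F$, invoke Proposition~\ref{l-p}(2) to see that pure submodule is Lucas, feed this into Lemma~\ref{211} for the preenvelope, and then upgrade via Lemma~\ref{212} using Theorem~\ref{invse lim lucas} and closure under summands. The detour through $S_q$ that you sketch and then abandon is extraneous; the paper goes straight to the pure-submodule argument, which is exactly what you settle on.
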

\begin{proof}
Let $M$ be an $R$-module with $\sharp(M)=\kappa_{\beta}$. For any Lucas module $F$ and $S\subseteq F$, by Lemma  \ref{210}, there exists a cardinal number $\kappa_{\alpha}$ depended only on $S$ and $R$ and a pure submodule $G $ of $F$ such that
$S\subseteq G$ and $\sharp(G) \leq \kappa_{\alpha}$. Then, by Proposition \ref{l-p}, $G$ also is a Lucas module. By Lemma \ref{211}, $M$ has an $\Lu$-preenvelope. By Proposition \ref{l-p} and Theorem \ref{invse lim lucas}, every direct summand and inverse system of Lucas modules are Lucas modules. Therefore every $R$-module has an $\Lu$-envelope by Lemma \ref{212}.
\end{proof}

Let $M$ be an $R$-modules. Then there exists exactly one $\Lu$-envelope of $M$ up to isomorphism. Next, we will study the construction of $\Lu$-envelope of an $R$-module.

\begin{lemma}\label{305}
{\rm
Let $M$ be a $\Q$-torsion-free $R$-module. Then the embedding map $\lambda: M\rightarrow M_q$ is the $\Lu$-envelope of $M$.}
\end{lemma}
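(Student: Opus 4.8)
The plan is to show that the embedding $\lambda\colon M\to M_q$ has the two defining properties of an $\Lu$-envelope: it is an $\Lu$-preenvelope (the extension/lifting property against all Lucas modules), and it is minimal (every endomorphism of $M_q$ fixing $\lambda$ is an isomorphism). Throughout I would lean on the already-cited \cite[Proposition 2.10]{wzcc20}, which tells us $M_q$ is a Lucas module, that $M_q/M$ is $\Q$-torsion (each $x\in M_q$ is killed into $M$ by some $I\in\Q$), and that $M_q$ is the minimal Lucas module containing $M$ inside $\E_R(M)$.

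\medskip

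\noindent\textbf{Step 1: $\lambda$ is an $\Lu$-preenvelope.} Let $F$ be any Lucas module and $f\colon M\to F$ an $R$-homomorphism; I must produce $g\colon M_q\to F$ with $g\circ\lambda=f$. Since $M_q/M$ is $\Q$-torsion and $F$ is a Lucas module, Proposition \ref{c-l-p} gives $\Ext^1_R(M_q/M,F)=0$. Applying $\Hom_R(-,F)$ to the short exact sequence $0\to M\xrightarrow{\lambda} M_q\to M_q/M\to 0$ yields that $\Hom_R(M_q,F)\to\Hom_R(M,F)$ is surjective, so $f$ extends to the desired $g$. (One subtlety: Proposition \ref{c-l-p} is stated for $\Q$-torsion-free $F$, which holds since $F$ is Lucas; and $M$ here is $\Q$-torsion-free by hypothesis, so $M_q$ makes sense.)

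\medskip

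\noindent\textbf{Step 2: minimality.} Suppose $h\colon M_q\to M_q$ satisfies $h\circ\lambda=\lambda$, i.e. $h$ restricts to the identity on $M$. I want $h$ to be an isomorphism. First, $h$ is injective: for $x\in M_q$ choose $I\in\Q$ with $Ix\subseteq M$; then $I\cdot h(x)=h(Ix)=Ix$, so $I(h(x)-x)=0$, and since $M_q$ is $\Q$-torsion-free we get $h(x)=x$. So in fact $h=\mathrm{id}_{M_q}$, which is certainly an isomorphism. This already shows $\lambda$ is an $\Lu$-envelope, and moreover — by the same computation — that $\lambda$ enjoys the unique mapping property: if $g_1,g_2\colon M_q\to F$ both extend $f$, then for $x\in M_q$ with $Ix\subseteq M$ we have $I(g_1(x)-g_2(x))=0$ in the $\Q$-torsion-free module $F$, forcing $g_1=g_2$.

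\medskip

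I expect no serious obstacle here; the argument is short once Proposition \ref{c-l-p} is invoked for the extension property, and the $\Q$-torsion-freeness trick handles both uniqueness and minimality at once. The only thing to be careful about is that every module in sight that we take a "$q$-envelope" or apply $\Q$-torsion-freeness to is genuinely $\Q$-torsion-free — this is automatic for $M$ (hypothesis), for $M_q$ (it embeds in $\E_R(M)\supseteq M$ and being a Lucas module is $\Q$-torsion-free by definition), and for any Lucas module $F$. If one wants, the last remark about the unique mapping property can be promoted to a separate statement, but it costs nothing to include it in this proof.
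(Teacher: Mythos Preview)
Your proof is correct. Step~1 (the preenvelope property) is identical to the paper's argument: both apply Proposition~\ref{c-l-p} to the short exact sequence $0\to M\to M_q\to M_q/M\to 0$ to see that $\Hom_R(M_q,L)\to\Hom_R(M,L)$ is surjective for every Lucas module $L$.

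Step~2 (minimality), however, takes a genuinely different route. The paper argues as follows: since $M_q\subseteq \E_R(M)$, the inclusion $\lambda$ is essential, so any $g$ with $g\circ\lambda=\lambda$ must be injective; then $\im g\cong M_q$ is a Lucas module containing $M$, and the minimality of $M_q$ (as the smallest Lucas submodule of $\E_R(M)$ containing $M$, from \cite[Proposition~2.10]{wzcc20}) forces $\im g=M_q$, whence $g$ is an isomorphism. Your argument is both shorter and sharper: using only that $M_q/M$ is $\Q$-torsion and $M_q$ is $\Q$-torsion-free, you show directly that $h=\mathrm{id}_{M_q}$, bypassing the essential-extension and minimality considerations entirely. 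This buys you more: the same computation immediately yields the unique mapping property of $\lambda$, which the paper establishes separately (and only for the general $\Lu$-envelope $\phi$) in the proof of Theorem~\ref{306}. The paper's approach, on the other hand, illustrates how the ``smallest Lucas module'' characterization of $M_q$ interacts with the envelope formalism, which is conceptually pleasant even if less efficient here.
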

\vskip1mm

\begin{proof} Let $L$ be a Lucas module. Let $M$ be a $\Q$-torsion-free $R$-module. Then $M_q/M$ is $\Q$-torsion by \cite[Proposition 3.5]{wzcc20}. So $\Ext^1_R(M_q/M,L)=0$ by Proposition \ref{c-l-p}. Consider the exact sequence
$$\Hom_R(M_q,L)\rightarrow \Hom_R(M,L)\rightarrow \Ext^1_R(M_q/M,L)=0.$$
So $\lambda :M\rightarrow M_q$ is an $\Lu$-preenvelope of $M$.

Let  $g:M_q\rightarrow M_q$ be an $R$-homomorphism satisfying $\lambda=g\circ \lambda$. Since $M_q\subseteq E(M)$, we have  $\lambda=g\circ \lambda$ is an essential extension of $M$. Hence, $g$ is a monomorphism. So $\im g \cong M_q$, and hence $\im g$ is a Lucas module. Since $M_q$ is the smallest Lucas module containing  $M$ by \cite[Proposition 2.10]{wzcc20}, it follows that $\im g=M_q$. So $g$ is an epimorphism.  Hence, $g$ is also an isomorphism.
\end{proof}

Next we will give the $\Lu$-envelopes for all $R$-modules.
\begin{theorem}\label{306}
{\rm
Let $M$ be an $R$-module. Then the natural composition $$\phi:M\twoheadrightarrow M/\Tor_{\Q}(M)\hookrightarrow (M/\Tor_{\Q}(M))_q$$ is the  $\Lu$-envelope of $M$. Moreover, $\phi$ has the unique mapping property.}
\end{theorem}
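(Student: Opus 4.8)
The plan is to factor the claim into two parts: first, that $\phi$ is an $\Lu$-envelope, and second, that it enjoys the unique mapping property. For the first part I would reduce to the torsion-free case already handled in Lemma~\ref{305}. Write $\bar M = M/\Tor_{\Q}(M)$; this is $\Q$-torsion-free because $\Tor_{\Q}$ is a radical (i.e. $\Tor_{\Q}(M/\Tor_{\Q}(M))=0$, which follows from the closure of $\Q$-torsion modules under extensions and the definition of $\tor_\Q$). By Lemma~\ref{305}, $\lambda:\bar M\to \bar M_q$ is an $\Lu$-envelope of $\bar M$, so I only need to check that precomposing with the projection $\pi:M\twoheadrightarrow \bar M$ does not destroy the (pre)enveloping or the minimality property. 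For the preenvelope part: given any Lucas module $L$ and any $f:M\to L$, since $L$ is $\Q$-torsion-free the composite $M\to L$ kills $\Tor_{\Q}(M)$, hence factors uniquely through $\pi$ as $M\xrightarrow{\pi}\bar M\xrightarrow{\bar f} L$; then $\bar f$ extends along $\lambda$ to $\bar M_q\to L$ by Lemma~\ref{305}, giving the required factorization of $f$ through $\phi$.

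For the envelope part (minimality), suppose $g:(\bar M)_q\to (\bar M)_q$ satisfies $g\circ\phi=\phi$. Restricting along $\pi$ and using that $\pi$ is surjective, this forces $g\circ\lambda=\lambda$ on $\bar M$, so $g$ is an isomorphism by the envelope property of $\lambda$ established in Lemma~\ref{305}. This completes the proof that $\phi$ is an $\Lu$-envelope.

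For the unique mapping property, I would argue as follows. Let $f':M\to L'$ with $L'$ a Lucas module. As above, $f'$ kills $\Tor_{\Q}(M)$, so it factors as $f'=\bar{f'}\circ\pi$ with $\bar{f'}:\bar M\to L'$ unique (since $\pi$ is epic). Now it suffices to show $\bar{f'}$ extends uniquely along $\lambda:\bar M\to (\bar M)_q$. Existence is Lemma~\ref{305}; for uniqueness, suppose $h_1,h_2:(\bar M)_q\to L'$ both restrict to $\bar{f'}$ on $\bar M$. Then $h_1-h_2$ vanishes on $\bar M$, i.e. it factors through $(\bar M)_q/\bar M$, which is $\Q$-torsion by \cite[Proposition 3.5]{wzcc20}; but any homomorphism from a $\Q$-torsion module to a $\Q$-torsion-free module (such as $L'$) is zero, so $h_1=h_2$. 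Then $g:=h_1\circ$(the canonical map) gives the unique factorization of $f'$ through $\phi$.

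The main obstacle I anticipate is nothing deep but rather the bookkeeping around the projection $\pi$: one must be careful that "$\phi$ is an envelope" genuinely follows from "$\lambda$ is an envelope" even though $\phi$ is not injective, and in particular that the relevant endomorphisms to test are those of the target $(\bar M)_q$, not of $M$. Once one observes that every map out of $M$ into a $\Q$-torsion-free module factors uniquely through $\pi$, the whole argument collapses to the torsion-free statement, and the uniqueness in the mapping property is exactly the observation $\Hom_R(\Q\text{-torsion},\,\Q\text{-torsion-free})=0$ applied to $(\bar M)_q/\bar M$.
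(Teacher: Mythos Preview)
Your proof is correct and follows essentially the same route as the paper: both arguments reduce to Lemma~\ref{305} by noting that any map from $M$ into a Lucas module kills $\Tor_{\Q}(M)$ and hence factors uniquely through $\pi$, then use surjectivity of $\pi$ to transfer the envelope property from $\lambda$ to $\phi$; for the unique mapping property, the paper packages the same observation $\Hom_R(\Q\text{-torsion},L)=0$ (together with $\Ext^1_R(\Q\text{-torsion},L)=0$) into the chain of isomorphisms $\Hom_R(M,L)\cong\Hom_R(\bar M,L)\cong\Hom_R((\bar M)_q,L)$, which is exactly your difference-of-extensions argument rephrased.
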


\begin{proof} Set $T=\Tor_{\Q}(M)$. Consider the exact sequences
$0\rightarrow T\rightarrow M\rightarrow M/T\rightarrow 0$ and $0\rightarrow M/T\rightarrow (M/T)_q\rightarrow K\rightarrow 0$. Then $K$ is  $\Q$-torsion by \cite[Proposition 3.5]{wzcc20}.
Let  $L$ be a Lucas module, we have the following two exact sequences
$$0\rightarrow \Hom_R(M/T,L)\rightarrow \Hom_R(M,L)\rightarrow \Hom_R(T,L)=0$$ and
$$0=\Hom_R(K,L)\rightarrow \Hom_R((M/T)_q,L)\rightarrow \Hom(M/T,L)\rightarrow \Ext_R^1(K,L)=0.$$
So the sequence $\Hom_R((M/T)_q,L)\rightarrow \Hom_R(M,L)\rightarrow 0$ is exact. Hence  $\phi$ is an $\Lu$-preenvelope of $M$.

Let $g:(M/T)_q\rightarrow (M/T)_q$ be an $R$-homomorphism satisfying $\phi=g\circ \phi$. Setting $f:M\twoheadrightarrow M/\Tor_{\Q}(M)$ and $h:M/\Tor_{\Q}(M)\hookrightarrow (M/\Tor_{\Q}(M))_q$ to be the natural map, then  $h\circ f=g\circ h\circ f$. Since $f$ is an epimorphism, it follows that  $h=g\circ h$.  By Lemma \ref{305}, we have $g$ is an isomorphism. So $\phi$ is an $\Lu$-envelope of $M$.

Now let $F'$ be a Lucas $R$-module. Then as  above we have the following natural isomorphisms: $$\Hom_R(M,F')\cong \Hom_R(M/T,F')\cong  \Hom_R((M/T)_q,F').$$
Hence for any $R$-homomorphism $f':M\rightarrow F'$, there exits a unique $R$-homomorphism $\varphi:(M/T)_q\rightarrow F'$ such that $\phi=\varphi\circ f'$. That is,  $\phi$ has the unique mapping property.
\end{proof}

Let $M$ be an $R$-module. Then we always denote $(M/\Tor_{\Q}(M))_q=L_q(M)$, and
call the natural map $\phi:M\rightarrow L_q(M)$ to be the Lucas envelope of $M$.

\begin{corollary}\label{int-lu-en}
{\rm
Let $R$ be an integral domain with $Q$ its quotient field, and $\mathcal{V}$ the class of all vector spaces over $Q$.
Then the following statements hold.
\begin{enumerate}
\item $\mathcal{V}$ is closed under its  pure submodule and inverse limit.
\item every   $R$-module $M$ has a $\mathcal{V}$-envelope $\phi: M\rightarrow M\otimes_RQ$ with unique mapping property.
\end{enumerate}
 }
\end{corollary}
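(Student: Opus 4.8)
The plan is to deduce everything from results already in hand, via the identification of $\mathcal{V}$ with the class $\Lu$ of Lucas modules supplied by Proposition \ref{int-luc}: parts (1) and (3) of that proposition say precisely that, over an integral domain, an $R$-module is a Lucas module if and only if it is a $Q$-vector space, so $\Lu=\mathcal{V}$ as classes of $R$-modules. Once this is recorded, the whole corollary is a translation exercise.

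Granting the identification, part (1) is immediate: Proposition \ref{l-p}(2) says every pure submodule of a Lucas module is a Lucas module, and Theorem \ref{invse lim lucas} says every inverse limit of Lucas modules is a Lucas module; rewriting $\Lu$ as $\mathcal{V}$ yields exactly that $\mathcal{V}$ is closed under pure submodules and under inverse limits.

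For part (2) the first step is to observe that over an integral domain $R$ the $\Q$-torsion submodule is the ordinary torsion submodule: since $(0:_RI)=0$ for every nonzero ideal $I$, a finitely generated ideal lies in $\Q$ exactly when it is nonzero, whence $\Tor_{\Q}(M)=\{x\in M\mid rx=0\text{ for some }0\neq r\in R\}$. The second step is to feed this into Theorem \ref{306}: the Lucas envelope of $M$ is the natural composition $M\twoheadrightarrow M/\Tor_{\Q}(M)\hookrightarrow (M/\Tor_{\Q}(M))_q$, and it is an $\Lu$-envelope with the unique mapping property. Now $M/\Tor_{\Q}(M)$ is torsion-free, so the ``Consequently'' clause of Proposition \ref{int-luc} identifies $(M/\Tor_{\Q}(M))_q$ with $(M/\Tor_{\Q}(M))\otimes_RQ$; and since $Q$ is flat over $R$ while $\Tor_{\Q}(M)\otimes_RQ=0$ (each torsion element dies after inverting $R\setminus\{0\}$), tensoring $0\to\Tor_{\Q}(M)\to M\to M/\Tor_{\Q}(M)\to 0$ with $Q$ gives a canonical isomorphism $M\otimes_RQ\cong (M/\Tor_{\Q}(M))\otimes_RQ$. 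Chasing through these identifications, the composition above becomes the canonical map $\phi\colon M\to M\otimes_RQ$, $m\mapsto m\otimes 1$; since $\Lu=\mathcal{V}$, this $\phi$ is the asserted $\mathcal{V}$-envelope with unique mapping property.

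The only point requiring a little care — and it is not a real obstacle — is the bookkeeping that the chain of isomorphisms $(M/\Tor_{\Q}(M))_q\cong (M/\Tor_{\Q}(M))\otimes_RQ\cong M\otimes_RQ$ is compatible with the maps out of $M$, so that the property ``$\Lu$-envelope with unique mapping property'' transports along it to ``$\mathcal{V}$-envelope with unique mapping property''; this is routine because every isomorphism involved is a canonical one.
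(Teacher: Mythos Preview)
Your proof is correct and takes essentially the same approach as the paper: the paper's proof is a one-line citation of Proposition \ref{l-p}, Proposition \ref{int-luc}, Theorem \ref{invse lim lucas}, and Theorem \ref{306}, and you have simply unpacked how these combine. The extra details you supply---that $\Tor_{\Q}(M)$ coincides with the ordinary torsion submodule over a domain, and that $(M/\Tor_{\Q}(M))\otimes_RQ\cong M\otimes_RQ$ via flatness of $Q$---are exactly what is needed to translate the conclusion of Theorem \ref{306} into the explicit form $\phi:M\to M\otimes_RQ$ claimed in the corollary, and the paper leaves these implicit.
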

\begin{proof} The two statements follow by Proposition \ref{l-p}, Proposition \ref{int-luc}, Theorem \ref{invse lim lucas} and Theorem \ref{306}.
\end{proof}

The following example shows that the inverse limit of $R$-modules may not commutate with their Lucas   envelopes, that is, $L_q(\lim\limits_{\longleftarrow}M_i)\not\cong\lim\limits_{\longleftarrow}L_q(M_i)$ in general.
\begin{example}\label{ex-inv-luc-env}
{\rm
Let $\mathbb{Z}$ be the ring of all integers with its quotient field $\mathbb{Q}$, $\mathbb{P}$ all primes in $\mathbb{Z}$ and $\mathbb{I}_p$ the quotient of $\mathbb{Z}$ by the ideal generated by a prime $p\in \mathbb{P}$. Since $\mathbb{I}_p$ is a torsion module, the  Lucas   envelope $L_q(\mathbb{I}_p)\cong \mathbb{I}_p\otimes_{\mathbb{Z}}\mathbb{Q}=0$ by \cite[Chapter I, Exercise 6.7]{FS01}. So $\prod_{p\in\mathbb{P}}L_q(\mathbb{I}_p)=0$. However, $L_q(\prod_{p\in\mathbb{P}}\mathbb{I}_p)$ is not equal to $0$ since $(\overline{1})\in \prod_{p\in\mathbb{P}}\mathbb{I}_p$ is not a torsion element (see \cite[Chapter I, Exercise 6.7]{FS01} again). Hence $\prod_{p\in\mathbb{P}}L_q(\mathbb{I}_p)\not\cong L_q(\prod_{p\in\mathbb{P}}\mathbb{I}_p)$.
}
\end{example}

\section{the direct limits and covering properties of Lucas modules}

We begin with the following result.
\begin{lemma}\label{dir lim Q-tf} Let $\{M_i, \varphi_{i,j}\mid i,j\in \Gamma\}$ be a direct system of $\Q$-torsion free  modules over $R$. Then $\lim\limits_{\longrightarrow}M_i$ is also $\Q$-torsion free.
\end{lemma}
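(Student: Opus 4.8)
The plan is to show that $\tor_{\Q}(\lim\limits_{\longrightarrow}M_i)=0$ by exploiting the well-known description of elements and of the zero element in a direct limit of modules. Recall that every element of $L:=\lim\limits_{\longrightarrow}M_i$ is of the form $\mu_i(x)$ for some $i\in\Gamma$ and some $x\in M_i$, where $\mu_i:M_i\to L$ is the canonical map, and that $\mu_i(x)=0$ if and only if there exists $j\geq i$ with $\varphi_{i,j}(x)=0$ in $M_j$.

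First I would take an arbitrary element $\xi\in\tor_{\Q}(L)$ and fix $I\in\Q$ with $I\xi=0$. Write $\xi=\mu_i(x)$ for some $i$ and $x\in M_i$. Since $I$ is finitely generated, say $I=\langle a_1,\dots,a_n\rangle$, the condition $I\xi=0$ gives $\mu_i(a_kx)=0$ for each $k=1,\dots,n$; hence for each $k$ there is $j_k\geq i$ with $\varphi_{i,j_k}(a_kx)=0$. Because $\Gamma$ is directed, choose $j\in\Gamma$ with $j\geq j_k$ for all $k$, and set $y=\varphi_{i,j}(x)\in M_j$. Applying the further transition maps and using that the $\varphi$'s are $R$-linear, we get $a_ky=\varphi_{i,j}(a_kx)=\varphi_{j_k,j}(\varphi_{i,j_k}(a_kx))=0$ for every $k$, so $Iy=0$, i.e. $y\in\tor_{\Q}(M_j)$. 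Since $M_j$ is $\Q$-torsion free, $y=0$, and therefore $\xi=\mu_i(x)=\mu_j(\varphi_{i,j}(x))=\mu_j(y)=0$. This shows $\tor_{\Q}(L)=0$, as desired.

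There is no serious obstacle here; the only point requiring a little care is the bookkeeping with the finite generating set of $I$ together with the directedness of $\Gamma$ to land all the vanishing conditions in a single index $j$. I would state the two standard facts about direct limits (element representation and the vanishing criterion) explicitly — citing, say, \cite{EJ00} in the same style as used elsewhere in the paper — and then the argument is a short direct computation. If one prefers, the same conclusion follows even more quickly from the fact that $\lim\limits_{\longrightarrow}$ is exact and commutes with $\otimes_R$: applying $\Hom_R(R/I,-)$ is not exact, but one can instead note $\tor_{\Q}(M)=\bigcup_{I\in\Q}\ker(M\to \Hom_R(I,M)^{\vee})$-type descriptions are awkward, so I would keep the elementary argument above, which is cleaner and self-contained.
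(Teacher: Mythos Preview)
Your argument is correct. It differs from the paper's proof, which is a one-line appeal to the standard fact that $\Hom_R(R/I,-)$ commutes with direct limits whenever $R/I$ is finitely presented (this holds for $I\in\Q$ because such $I$ is finitely generated by definition); thus $\Hom_R(R/I,\lim\limits_{\longrightarrow}M_i)\cong\lim\limits_{\longrightarrow}\Hom_R(R/I,M_i)=0$, which is exactly the $\Q$-torsion-free condition. Your elementwise argument unwinds this isomorphism by hand: the bookkeeping with the finite generating set of $I$ and the directedness of $\Gamma$ is precisely what makes the Hom--colimit commutation work for finitely presented objects. The paper's route is quicker if one is willing to cite the lemma, while yours is self-contained and makes explicit where finite generation of $I$ is used. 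Your closing remark that the $\Hom$-based approach ``is not exact'' is a bit misleading---one does not need exactness here, only that $\Hom_R(R/I,-)$ preserves filtered colimits---so if you revise, you could simply note that both routes are available and equivalent.
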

\begin{proof} Let $I\in \Q$ and $\{M_i, \varphi_{i,j}\mid i,j\in \Gamma\}$  a direct system of $\Q$-torsion free  modules. Then $\Hom_R(R/I,\lim\limits_{\longrightarrow}M_i)\cong \lim\limits_{\longrightarrow}\Hom_R(R/I,M_i)=0$ by \cite[Lemma 2.7]{gt}. So $\lim\limits_{\longrightarrow}M_i$ is also $\Q$-torsion free.
\end{proof}

\begin{proposition}\label{dir lim iso Q-tf} Let $I\in \Q$ and $\{M_i, \varphi_{i,j}\mid i,j\in \Gamma\}$ be a direct system of $\Q$-torsion free  $R$-modules. Then we have a natural isomorphism: $$\lim\limits_{\longrightarrow}\Hom_R(I,M_i)\cong\Hom_R(I, \lim\limits_{\longrightarrow}M_i).$$
\end{proposition}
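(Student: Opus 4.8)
The plan is to reduce this to the fact that finite presentation lets $\Hom$ commute with direct limits, even though $I$ itself need not be finitely presented. The key point is that $I\in\Q$ is semi-regular, so there is a finitely generated dense sub-ideal $J\subseteq I$ with $J\in\Q$, and $I/J$ is $\Q$-torsion. First I would fix $I\in\Q$ and such a $J$, and consider the short exact sequence $0\to J\to I\to I/J\to 0$. Since each $M_i$ is $\Q$-torsion-free and $I/J$ is $\Q$-torsion, we get $\Hom_R(I/J,M_i)=0$ and hence an isomorphism $\Hom_R(I,M_i)\cong\Hom_R(J,M_i)$ for every $i$; moreover this is natural in $i$, so $\varprojlim$—I mean $\varinjlim$—of these isomorphisms gives $\varinjlim\Hom_R(I,M_i)\cong\varinjlim\Hom_R(J,M_i)$. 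By Lemma \ref{dir lim Q-tf}, $M:=\varinjlim M_i$ is again $\Q$-torsion-free, so the same exact sequence yields $\Hom_R(I,M)\cong\Hom_R(J,M)$.

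Next I would handle $J$. Being finitely generated, $J$ is a quotient of a finitely generated free module $R^n$; write $0\to K\to R^n\to J\to 0$. Here $K$ is not finitely generated in general, which is the crux, so I cannot directly invoke finite presentation of $J$. Instead I would first establish the proposition for a finitely generated free module in place of $I$, where it is immediate since $\Hom_R(R^n,-)\cong(-)^n$ commutes with direct limits. Then, applying $\Hom_R(-,M_i)$ and $\Hom_R(-,M)$ to the sequence $0\to K\to R^n\to J\to 0$, I get two exact sequences
\[
0\to\Hom_R(J,M_i)\to\Hom_R(R^n,M_i)\to\Hom_R(K,M_i)
\]
and similarly for $M$. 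Taking $\varinjlim$ of the first (exact, since direct limits are exact) and comparing with the second via the natural transformation, a diagram chase shows that $\varinjlim\Hom_R(J,M_i)\to\Hom_R(J,M)$ is injective for free reasons; the surjectivity is where the argument needs care. For surjectivity I would use that $J\in\Q$, i.e.\ $J$ is dense and finitely generated: given $\psi\in\Hom_R(J,M)$, lift the composite $R^n\twoheadrightarrow J\xrightarrow{\psi}M$ through $\varinjlim M_i$ using that $R^n$ is finitely generated free (so the lift factors through some $M_i$), and then check that the resulting map $R^n\to M_i$ kills $K$ after passing to a further index $i'\geq i$—this last step uses that $K$ is generated by relations among finitely many generators of $J$ over a ring where only finitely many such relations need to be checked, together with $\Q$-torsion-freeness of the $M_{i'}$ to eliminate the ambiguity. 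Combining the reductions $\Hom_R(I,-)\cong\Hom_R(J,-)$ with the free-module computation gives the desired natural isomorphism $\varinjlim\Hom_R(I,M_i)\cong\Hom_R(I,\varinjlim M_i)$.

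The main obstacle, as indicated, is the surjectivity at the level of $J$: a homomorphism $J\to\varinjlim M_i$ need not obviously come from any single $\Hom_R(J,M_i)$ because $J$ may fail to be finitely presented. I expect the cleanest route around this is to exploit that $J$ is finitely generated together with the $\Q$-torsion-freeness of the modules to show that although $K$ is large, its image under any map $R^n\to M$ factoring through $\varinjlim$ becomes zero at a finite stage—essentially because the obstruction lives in a $\Q$-torsion submodule which must vanish in each $\Q$-torsion-free $M_i$. An alternative, possibly slicker, is to observe that $J$ being finitely generated and dense makes $R^n/K\cong J\hookrightarrow R$ with $R/J$ having a finitely generated annihilator-type description, allowing one to replace $K$ by a finitely generated submodule up to $\Q$-torsion and thereby genuinely reduce to the finitely presented case; I would try that reduction first, as it would make the whole argument formal once Lemma \ref{dir lim Q-tf} and exactness of $\varinjlim$ are in hand.
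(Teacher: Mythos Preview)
Your overall architecture matches the paper's: present $I$ as a quotient $0\to K\to R^n\to I\to 0$ of a finitely generated free module, note that the comparison map on $\Hom_R(R^n,-)$ is an isomorphism and the one on $\Hom_R(I,-)$ is therefore injective, and identify surjectivity as the real issue because $K$ need not be finitely generated. (A minor point: by definition $\Q$ consists of \emph{finitely generated} semi-regular ideals, so $I$ is already finitely generated and your preliminary reduction from $I$ to $J$ is vacuous---you may take $J=I$.)

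The genuine gap is in how you handle $K$. Your route (a) asserts that a lift $R^n\to M_i$ will kill $K$ after passing to some $i'\ge i$, ``because only finitely many relations need to be checked'' and $\Q$-torsion-freeness ``eliminates the ambiguity.'' As stated this is not an argument: $K$ may be infinitely generated, so there is no a priori reason a single index $i'$ suffices, and you have not explained what $\Q$-torsion quantity is supposed to vanish. Your route (b)---replace $K$ by a finitely generated submodule $L$ with $K/L$ $\Q$-torsion---is exactly the paper's approach, but you give no reason such an $L$ exists. That existence is the crux, and it is not formal: the paper obtains it by observing that $I[x]$ is a regular ideal of $R[x]$, so $I\otimes_R \T(R[x])\cong \T(R[x])$ is finitely presented over $\T(R[x])$, whence $I$ is $\tau_q$-finitely presented and therefore $K$ is $\tau_q$-finitely generated. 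Once you have such an $L$, both of your routes complete immediately: $\Hom_R(K/L,-)$ vanishes on $\Q$-torsion-free modules, so $\Hom_R(K,-)\hookrightarrow\Hom_R(L,-)$, and the comparison map for the finitely generated $L$ is injective; a Five-Lemma/diagram chase then gives the missing surjectivity for $I$. Without this $\tau_q$-finite-presentation step, your proposal does not close.
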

\begin{proof} Consider the exact sequence $0\rightarrow K\rightarrow F\rightarrow I\rightarrow 0$ with $F$ a finitely generated free $R$-module. Then we have the following commutative diagram with rows exact:
$$\xymatrix@R=20pt@C=15pt{
0 \ar[r]^{}  & \lim\limits_{\longrightarrow}\Hom_R(I,M_i) \ar[d]_{f_1}\ar[r]^{} & \lim\limits_{\longrightarrow}\Hom_R(F,M_i)  \ar[d]^{f_2}\ar[r]^{} & \lim\limits_{\longrightarrow}\Hom_R(K,M_i)\ar[d]^{f_3}    \\
   0 \ar[r]^{} & \Hom_R(I, \lim\limits_{\longrightarrow}M_i)\ar[r]^{} &\Hom_R(F, \lim\limits_{\longrightarrow}M_i)  \ar[r]^{} & \Hom_R(K, \lim\limits_{\longrightarrow}M_i) .}$$
Trivially, $f_2$ is an isomorphism and $f_1$ is a monomorphism by \cite[Lemma 2.7]{gt}. To show $f_1$ is an epimorphism, we just need to prove  $f_3$ is a monomorphism. Indeed, since $I\in\Q$, it follows that $I[x]$ is a regular ideal of $R[x]$ by \cite[Exercise 6.5]{fk16}. Then $I\otimes_R\T(R[x])=\T(R[x])$ is a finitely presented $\T(R[x])$-module. So $I$ is $\tau_q$-finitely presented by \cite[Theorem 3.3]{z-tq-f-c}. Hence, $K$ is  $\tau_q$-finitely generated by \cite[Remark 3.2(5)]{z-tq-f-c}. It follows that there is a finitely generated submodule $L$ of $K$ such that $K/L$ is $\Q$-torsion. Consider the following commutative diagram with rows exact:
$$\xymatrix@R=20pt@C=15pt{
0 \ar@{=}[r]^{}  & \lim\limits_{\longrightarrow}\Hom_R(K/L,M_i) \ar[d]_{g_1}\ar[r]^{} & \lim\limits_{\longrightarrow}\Hom_R(K,M_i)  \ar[d]^{f_3}\ar[r]^{} & \lim\limits_{\longrightarrow}\Hom_R(L,M_i)\ar[d]^{g_2}    \\
   0 \ar@{=}[r]^{} & \Hom_R(K/L, \lim\limits_{\longrightarrow}M_i)\ar[r]^{} &\Hom_R(K, \lim\limits_{\longrightarrow}M_i)  \ar[r]^{} & \Hom_R(L, \lim\limits_{\longrightarrow}M_i) .}$$
Since $L$ is finitely generated, $g_2$ is a monomorphism by \cite[Lemma 2.7]{gt}. Hence $f_3$ is also a monomorphism by Five Lemma.
\end{proof}

\begin{theorem}\label{dir lim lucas} Let $\{M_i, \varphi_{i,j}\mid i,j\in \Gamma\}$ be a direct system of Lucas  modules over $R$. Then $\lim\limits_{\longrightarrow}M_i$ is also a Lucas module.
\end{theorem}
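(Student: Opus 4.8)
The plan is to verify directly that $\lim\limits_{\longrightarrow}M_i$ satisfies the two defining conditions of a Lucas module: that it is $\Q$-torsion-free, and that $\Ext_R^1(R/I,\lim\limits_{\longrightarrow}M_i)=0$ for every $I\in\Q$. The first condition is already handled by Lemma \ref{dir lim Q-tf}, so the entire content of the argument lies in the vanishing of $\Ext^1$. For this I would fix $I\in\Q$ and consider a short exact sequence $0\rightarrow K\rightarrow F\rightarrow I\rightarrow 0$ with $F$ a finitely generated free $R$-module (the same presentation used in Proposition \ref{dir lim iso Q-tf}), and chase the long exact sequence $\Hom_R(F,-)\rightarrow\Hom_R(K,-)\rightarrow\Ext_R^1(I,-)\rightarrow 0$ together with $\Ext_R^1(R/I,-)\cong\Ext_R^1(I,-)$ (since $\Ext_R^1(R,-)=\Ext_R^2(R/I,-)=0$ would require care; more safely, use the sequence $0\to I\to R\to R/I\to 0$ to identify $\Ext_R^1(R/I,M)$ with the cokernel of $\Hom_R(R,M)\to\Hom_R(I,M)$ for any $\Q$-torsion-free $M$, since $\Hom_R(R/I,M)=0$).

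The key step is the following compatibility: for each $i$ we have $\Ext_R^1(R/I,M_i)=0$ because $M_i$ is Lucas, which by the above means $\Hom_R(R,M_i)\rightarrow\Hom_R(I,M_i)$ is surjective. Passing to the direct limit over $\Gamma$ — which is exact — the map $\lim\limits_{\longrightarrow}\Hom_R(R,M_i)\rightarrow\lim\limits_{\longrightarrow}\Hom_R(I,M_i)$ is still surjective. Now I invoke Proposition \ref{dir lim iso Q-tf} to replace $\lim\limits_{\longrightarrow}\Hom_R(I,M_i)$ by $\Hom_R(I,\lim\limits_{\longrightarrow}M_i)$, and the trivial identity $\lim\limits_{\longrightarrow}\Hom_R(R,M_i)\cong\lim\limits_{\longrightarrow}M_i\cong\Hom_R(R,\lim\limits_{\longrightarrow}M_i)$ on the source, with the evident commuting square. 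This shows $\Hom_R(R,\lim\limits_{\longrightarrow}M_i)\rightarrow\Hom_R(I,\lim\limits_{\longrightarrow}M_i)$ is surjective, i.e. every $R$-homomorphism $I\rightarrow\lim\limits_{\longrightarrow}M_i$ extends to $R$; combined with $\Q$-torsion-freeness from Lemma \ref{dir lim Q-tf}, this is precisely the assertion that $\lim\limits_{\longrightarrow}M_i$ is a Lucas module.

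The main obstacle is ensuring that Proposition \ref{dir lim iso Q-tf} actually applies here and that the isomorphism it provides is compatible with the natural restriction maps along $I\hookrightarrow R$; this is where the hypothesis $I\in\Q$ is essential, since the proof of that proposition used $\tau_q$-finite presentation of $I$ to control the kernel $K$. One should double-check that the diagram

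$$\xymatrix@R=18pt@C=24pt{
\lim\limits_{\longrightarrow}\Hom_R(R,M_i)\ar[r]\ar[d]_{\cong} & \lim\limits_{\longrightarrow}\Hom_R(I,M_i)\ar[d]^{\cong}\\
\Hom_R(R,\lim\limits_{\longrightarrow}M_i)\ar[r] & \Hom_R(I,\lim\limits_{\longrightarrow}M_i)
}$$

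commutes with the horizontal maps induced by $I\hookrightarrow R$, which follows from naturality of the canonical morphism $\lim\limits_{\longrightarrow}\Hom_R(-,M_i)\rightarrow\Hom_R(-,\lim\limits_{\longrightarrow}M_i)$ in the first variable. Once this naturality is in hand, the surjectivity transfers immediately and the proof is complete.
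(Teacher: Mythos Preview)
Your proof is correct and follows essentially the same approach as the paper: both invoke Lemma~\ref{dir lim Q-tf} for $\Q$-torsion-freeness, then use the commutative square comparing $\lim\limits_{\longrightarrow}\Hom_R(R,M_i)\to\lim\limits_{\longrightarrow}\Hom_R(I,M_i)$ with $\Hom_R(R,\lim\limits_{\longrightarrow}M_i)\to\Hom_R(I,\lim\limits_{\longrightarrow}M_i)$, with Proposition~\ref{dir lim iso Q-tf} supplying the crucial right-hand vertical isomorphism. The only cosmetic difference is that the paper extends the rows to include the $\Ext^1$ terms and concludes $\Ext^1_R(R/I,\lim\limits_{\longrightarrow}M_i)\cong\lim\limits_{\longrightarrow}\Ext^1_R(R/I,M_i)=0$ directly, whereas you phrase the same conclusion as surjectivity of the restriction map.
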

\begin{proof} Let $I\in \Q$ and  $\{M_i, \varphi_{i,j}\mid i,j\in \Gamma\}$  a direct system of Lucas  modules over $R$. Consider the following commutative diagram with rows exact:
$$\xymatrix@R=20pt@C=15pt{
\lim\limits_{\longrightarrow}\Hom_R(R,M_i) \ar[d]_{\cong}\ar[r]^{} & \lim\limits_{\longrightarrow}\Hom_R(I,M_i)  \ar[d]^{f_1}\ar[r]^{} & \lim\limits_{\longrightarrow}\Ext^1_R(R/I,M_i)\ar[d]^{h_1}\ar[r]^{} & 0   \\
\Hom_R(R, \lim\limits_{\longrightarrow}M_i)\ar[r]^{} &\Hom_R(I, \lim\limits_{\longrightarrow}M_i)  \ar[r]^{} &\Ext^1_R(R/I, \lim\limits_{\longrightarrow}M_i) \ar[r]^{} & 0.}$$
 By Proposition \ref{dir lim iso Q-tf}, $f_1$ is an isomorphism. So $h_1$ is also an isomorphism. Hence $\Ext^1_R(R/I, \lim\limits_{\longrightarrow}M_i) \cong \lim\limits_{\longrightarrow}\Ext^1_R(R/I,M_i)=0$. Combining with Lemma \ref{dir lim Q-tf}, we have  $\lim\limits_{\longrightarrow}M_i$ is also a Lucas module.
\end{proof}

\begin{lemma}\label{LC-enve-loc} Let $\m$ be a maximal  $q$-ideal of $R$ and $M$  an $R$-module. Then $L_q(M)_{\m}\cong M_{\m}$.
\end{lemma}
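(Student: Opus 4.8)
Let $\m$ be a maximal $q$-ideal of $R$ and $M$ an $R$-module. Then $L_q(M)_{\m}\cong M_{\m}$.

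My plan is to reduce everything to the behavior of the two constructive ingredients of $L_q$ — quotienting by $\Q$-torsion and then passing to the Lucas envelope — under localization at a maximal $q$-ideal $\m$, using the basic fact that $\Q$-torsion modules vanish after localizing at $\m$. Recall $L_q(M)=(M/T)_q$ where $T=\Tor_{\Q}(M)$, and we have the exact sequences $0\to T\to M\to M/T\to 0$ and $0\to M/T\to (M/T)_q\to K\to 0$ with $K$ being $\Q$-torsion by \cite[Proposition 3.5]{wzcc20}. The first step is to observe that since $\m$ is a maximal $q$-ideal, i.e. a maximal non-semi-regular ideal, no finitely generated semi-regular ideal is contained in $\m$; hence for any $\Q$-torsion module $N$ and any $x\in N$ with $Ix=0$, $I\in\Q$, there is some $s\in I\setminus\m$ with $sx=0$, so $x/1=0$ in $N_\m$. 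Therefore $N_\m=0$ for every $\Q$-torsion $R$-module $N$.

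With that observation, localizing the first exact sequence at $\m$ gives $T_\m=0$, hence $M_\m\cong (M/T)_\m$. Localizing the second gives $K_\m=0$, hence $(M/T)_\m\cong ((M/T)_q)_\m = L_q(M)_\m$. Composing these two isomorphisms yields $M_\m\cong L_q(M)_\m$, and chasing the maps shows the isomorphism is induced by the natural map $\phi:M\to L_q(M)$, i.e. $\phi_\m$ is an isomorphism. (Exactness of localization is what makes both reductions work; this is entirely routine.)

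I expect no serious obstacle here — the only point that requires a moment's care is the very first claim, that $\Q$-torsion modules localize to zero at a maximal $q$-ideal, which hinges precisely on the identification (recalled in the excerpt from \cite[Proposition 2.5, 2.7]{ZDC20}) of $q$-$\Max(R)$ with the set of maximal non-semi-regular ideals: this is exactly what guarantees that a finitely generated semi-regular ideal $I$ cannot sit inside $\m$, so $I\not\subseteq\m$ and one can find an annihilator element outside $\m$. Everything after that is a two-line diagram chase using the two defining short exact sequences for $L_q(M)$ together with exactness of localization. One could alternatively phrase the whole argument by saying $\phi:M\to L_q(M)$ is a $\tau_q$-isomorphism — its kernel $T$ and cokernel $K$ are both $\Q$-torsion — and then invoke \cite[Proposition 2.7(5)]{ZDC20} directly to conclude $\phi_\m$ is an isomorphism for every $\m\in q\mbox{-}\Max(R)$; I would probably present it that way for brevity.
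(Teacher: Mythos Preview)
Your proof is correct and follows essentially the same approach as the paper: both set up the two defining short exact sequences for $L_q(M)$, note that the kernel and cokernel of $\phi:M\to L_q(M)$ are $\Q$-torsion, and conclude by localizing at $\m$. The only cosmetic difference is that the paper cites \cite[Theorem 3.4]{wzcc20} for the vanishing of $\Q$-torsion modules at a maximal $q$-ideal, whereas you spell out that argument directly (and also note the equivalent $\tau_q$-isomorphism formulation).
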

\begin{proof} Let $M$ be an $R$-module. Then we have two exact sequences $0\rightarrow tor_{\Q}(M)\rightarrow M\rightarrow M/tor_{\Q}(M)\rightarrow 0$ and $0\rightarrow  M/tor_{\Q}(M)\rightarrow L_q(M)\rightarrow T\rightarrow 0$. Then $T$ is $\Q$-torsion by \cite[Proposition 3.5]{wzcc20}.  Let  $\m$ be a maximal  $q$-ideal of $R$. Localizing at $\m$, we have  $L_q(M)_{\m}\cong M_{\m}$ by \cite[Theorem 3.4]{wzcc20}.
\end{proof}

\begin{proposition}\label{dir lim-LC-enve} Let $\{M_i, \varphi_{i,j}\mid i,j\in \Gamma\}$ be a direct system of $R$-modules. Then $L_q(\lim\limits_{\longrightarrow}M_i)\cong\lim\limits_{\longrightarrow}L_q(M_i)$.
\end{proposition}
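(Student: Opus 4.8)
The strategy is to identify $\lim\limits_{\longrightarrow}L_q(M_i)$, together with the canonical map out of $\lim\limits_{\longrightarrow}M_i$, as an object that is forced to be isomorphic to $L_q(\lim\limits_{\longrightarrow}M_i)$; rather than checking the enveloping universal property head-on (which is awkward, since applying $\Hom_R(-,L)$ turns the direct limit into an inverse limit and the resulting map $\lim\limits_{\longleftarrow}\Hom_R(L_q(M_i),L)\to\lim\limits_{\longleftarrow}\Hom_R(M_i,L)$ need not stay surjective), I would route everything through $\tau_q$-isomorphisms. First, the unique mapping property in Theorem~\ref{306} makes $M\mapsto L_q(M)$ a functor, so $\{L_q(M_i),L_q(\varphi_{i,j})\}$ is a direct system, the Lucas envelopes $\phi_i\colon M_i\to L_q(M_i)$ form a morphism of direct systems, and we obtain $\psi:=\lim\limits_{\longrightarrow}\phi_i\colon M\to N$, where I write $M=\lim\limits_{\longrightarrow}M_i$ and $N=\lim\limits_{\longrightarrow}L_q(M_i)$. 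By Theorem~\ref{dir lim lucas}, $N$ is a Lucas module.

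Next I would verify that $\psi$ is a $\tau_q$-isomorphism. Writing $\phi_i$ as $M_i\twoheadrightarrow M_i/\Tor_{\Q}(M_i)\hookrightarrow L_q(M_i)$, its kernel is $\Tor_{\Q}(M_i)$ and its cokernel is $L_q(M_i)/(M_i/\Tor_{\Q}(M_i))$, both $\Q$-torsion (the second by \cite[Proposition 3.5]{wzcc20}); hence each $\phi_i$ is a $\tau_q$-isomorphism. Direct limits are exact and the class of $\Q$-torsion modules is closed under direct limits (being closed under direct sums and quotients), so $\Ker\psi\cong\lim\limits_{\longrightarrow}\Ker\phi_i$ and $\Cok\psi\cong\lim\limits_{\longrightarrow}\Cok\phi_i$ are $\Q$-torsion; thus $\psi$ is a $\tau_q$-isomorphism. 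The same computation applied to $M$ itself shows the Lucas envelope $\phi\colon M\to L_q(M)$ is a $\tau_q$-isomorphism (this is also visible from Lemma~\ref{LC-enve-loc}).

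By the enveloping property of $\phi$ (Theorem~\ref{306}), applied to the map $\psi\colon M\to N$ into the Lucas module $N$, there is a (unique) $\alpha\colon L_q(M)\to N$ with $\psi=\alpha\circ\phi$, and it remains to show $\alpha$ is an isomorphism. Localizing at any $\m\in q$-$\Max(R)$, both $\phi_\m$ and $\psi_\m$ are isomorphisms, so $\alpha_\m=\psi_\m\circ\phi_\m^{-1}$ is an isomorphism; hence $\alpha$ is a $\tau_q$-isomorphism between the two Lucas modules $L_q(M)$ and $N$. Finally, a $\tau_q$-isomorphism of Lucas modules is an isomorphism: $\Ker\alpha$ is a $\Q$-torsion submodule of the $\Q$-torsion-free module $L_q(M)$, hence zero, and in the exact sequence $0\to L_q(M)\xrightarrow{\alpha}N\to\Cok\alpha\to0$ the cokernel is $\Q$-torsion, so the sequence splits by Proposition~\ref{c-l-p}, which exhibits $\Cok\alpha$ as a submodule of the $\Q$-torsion-free module $N$, forcing $\Cok\alpha=0$. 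Therefore $L_q(\lim\limits_{\longrightarrow}M_i)\cong\lim\limits_{\longrightarrow}L_q(M_i)$. The main point requiring care is precisely this last observation — that $\tau_q$-isomorphisms between Lucas modules are genuine isomorphisms — together with the decision to work locally at maximal $q$-ideals rather than attempting to verify the enveloping property directly.
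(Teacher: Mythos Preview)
Your argument is correct and follows the same core strategy as the paper: both recognize that $\lim\limits_{\longrightarrow}L_q(M_i)$ is already a Lucas module (Theorem~\ref{dir lim lucas}), that the envelope maps are $\tau_q$-isomorphisms, and that this forces the desired identification. The paper's proof is terser: it invokes Lemma~\ref{LC-enve-loc} to get $(L_q(\lim\limits_{\longrightarrow}M_i))_\m\cong(\lim\limits_{\longrightarrow}L_q(M_i))_\m$ at every $\m\in q\text{-}\Max(R)$, then cites \cite[Proposition~2.7(6)]{ZDC20} to pass from local isomorphisms to $(L_q(\lim\limits_{\longrightarrow}M_i))_q\cong(\lim\limits_{\longrightarrow}L_q(M_i))_q$. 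Your version is more self-contained and arguably more careful: you explicitly build the comparison map $\alpha$ via the universal property of Theorem~\ref{306}, and you prove internally (via Proposition~\ref{c-l-p} and a splitting argument) that a $\tau_q$-isomorphism between Lucas modules is a genuine isomorphism, rather than outsourcing that step to \cite{ZDC20}. The paper's route is shorter; yours makes the existence of a global map transparent and avoids relying on the external reference.
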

\begin{proof} Let $\m$ be a maximal  $q$-ideal of $R$. Then we have  $$(L_q(\lim\limits_{\longrightarrow}M_i))_{\m}\cong (\lim\limits_{\longrightarrow}M_i)_{\m}\cong \lim\limits_{\longrightarrow}(M_i)_{\m}\cong \lim\limits_{\longrightarrow}(L_q(M_i))_{\m}$$
by Lemma \ref{LC-enve-loc}. It follows  by \cite[Proposition 2.7(6)]{ZDC20} that $$L_q(\lim\limits_{\longrightarrow}M_i)\cong (L_q(\lim\limits_{\longrightarrow}M_i))_q\cong(\lim\limits_{\longrightarrow}L_q(M_i))_q\cong \lim\limits_{\longrightarrow}L_q(M_i)$$
since $\lim\limits_{\longrightarrow}L_q(M_i)$ is a Lucas module by Theorem \ref{dir lim lucas}.
\end{proof}

Let  $\mathcal{C}$ be a  class  of $R$-modules and $M$ an $R$-module. An $R$-homomorphism $f: C\rightarrow M$ with  $C\in \mathcal{C}$ is said to be a $ \mathcal{C}$-precover  provided that  for any $C'\in \mathcal{C}$, the natural homomorphism  $\Hom_R(C',C)\rightarrow \Hom_R(C',M)$ is an epimorphism. If, moreover, every endomorphism $h$ such that $f=f\circ h$  is an isomorphism, then $f: C\rightarrow M$ is said to be a
$\mathcal{C}$-cover.  If every $R$-module has a $\mathcal{C}$-precover (resp.,  $\mathcal{C}$-cover), then $\mathcal{C}$ is said to be precovering (resp., $\mathcal{C}$-covering). Let $f: C\rightarrow M$ be a  $\mathcal{C}$-cover of $M$. If for any $f':C'\rightarrow M$ with $C'\in\mathcal{C}$, there exists a unique $g:C'\rightarrow C$ such that $f'=f\circ g$. Then $f$ is said to have the unique mapping property. Let $\Lu$  be the class of all Lucas modules over a ring $R$. An $\Lu$-(pre)cover of an $R$-module $M$ is said to be a Lucas (pre)cover of $M$. The author et al. \cite{zxl20} put \cite[Theorem 3.4]{DD19},  {\cite[Theorem 3.4.8]{P09} and {\cite[Theorem 2.5]{HJ08}} together in the following lemma.

\begin{lemma}\cite[Lemma 3.4]{zxl20}\label{definable}
Let  $\mathcal{C}$ be a class  of $R$-modules that is closed under pure submodules and direct products. Then the following assertions are equivalent:
\begin{enumerate}
    \item $\mathcal{C}$ is closed under direct limits, that is, $\mathcal{C}$ is definable;
    \item $\mathcal{C}$ is closed under pure quotients;
    \item $\mathcal{C}$ is covering;
     \item $\mathcal{C}$ is precovering.
\end{enumerate}
\end{lemma}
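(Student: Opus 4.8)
The plan is to establish the cycle of implications $(1)\Rightarrow(3)\Rightarrow(4)\Rightarrow(2)\Rightarrow(1)$, and to note in passing the classical equivalence $(1)\Leftrightarrow(2)$. Two of the links are purely formal. The implication $(3)\Rightarrow(4)$ is trivial, since a $\mathcal{C}$-cover is in particular a $\mathcal{C}$-precover. Next, the two standing hypotheses already force $\mathcal{C}$ to be closed under arbitrary direct sums: for $M_i\in\mathcal{C}$ we have $\prod_i M_i\in\mathcal{C}$, the canonical map $\bigoplus_i M_i\hookrightarrow\prod_i M_i$ is a pure embedding, and $\mathcal{C}$ is closed under pure submodules. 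Hence $(2)\Rightarrow(1)$: for a direct system $\{M_i,\varphi_{i,j}\}$ of modules in $\mathcal{C}$ one has $\bigoplus_i M_i\in\mathcal{C}$, and the canonical epimorphism $\bigoplus_i M_i\twoheadrightarrow\lim\limits_{\longrightarrow}M_i$ is a pure epimorphism, so $\lim\limits_{\longrightarrow}M_i\in\mathcal{C}$ by $(2)$. Separately, $(1)\Rightarrow(2)$ is the classical fact that a subcategory closed under direct products, direct limits and pure submodules is a \emph{definable} subcategory in the sense of \cite{P09}, and that definable subcategories are closed under pure-epimorphic images: a pure epimorphism $f:B\to C$ satisfies $\phi(C)=f(\phi(B))$ for every pp-formula $\phi$, so a pure quotient of a module satisfying a prescribed family of pp-pair conditions satisfies the same conditions.

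For $(1)\Rightarrow(3)$ I would run the Enochs-type existence argument, using the cardinality estimate of Lemma~\ref{210}. That lemma provides a cardinal $\kappa$, depending only on $\sharp(R)$, such that for every $C\in\mathcal{C}$ and every subset $S\subseteq C$ with $\sharp(S)\leq\kappa$ there is a pure submodule $P$ of $C$ with $S\subseteq P$ and $\sharp(P)\leq\kappa$; since $\mathcal{C}$ is closed under pure submodules, $P\in\mathcal{C}$. Applying this to the sum of two such submodules shows that the $\leq\kappa$-sized pure submodules of $C$ form a directed family with union $C$, so every $C\in\mathcal{C}$ is a direct limit of modules from the \emph{set} $\mathcal{S}$ of representatives of the modules in $\mathcal{C}$ of cardinality $\leq\kappa$. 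As $\mathcal{C}$ is also closed under direct limits, $\mathcal{C}$ is exactly the direct-limit closure of $\mathcal{S}$; such a class is precovering by the standard solution-set argument, and a precovering class closed under direct limits is covering by the usual direct-limit minimization of precovers (see \cite{EJ00}). So every $R$-module has a $\mathcal{C}$-cover. (Equivalently, one may cite \cite[Theorem 3.4]{DD19}, \cite[Theorem 3.4.8]{P09} and \cite[Theorem 2.5]{HJ08} as in \cite{zxl20}.)

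The genuinely non-formal step, and the one I expect to be the main obstacle, is $(4)\Rightarrow(2)$: a precovering class closed under direct products and pure submodules is closed under pure-epimorphic images. The reduction is easy. Let $0\to K\to C\xrightarrow{p}X\to0$ be pure exact with $C\in\mathcal{C}$, and let $\pi:P\to X$ be a $\mathcal{C}$-precover. Since $C\in\mathcal{C}$, the morphism $p$ factors as $p=\pi g$ for some $g:C\to P$; as $p$ is a pure epimorphism, so is $\pi$, whence $X$ is a pure-epimorphic image of $P\in\mathcal{C}$ (with $\Ker(\pi)$ again in $\mathcal{C}$). It remains to show $X\in\mathcal{C}$, and this is exactly where closure under direct products must be exploited. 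A natural route, close to the Holm--J\o rgensen argument of \cite[Theorem 2.5]{HJ08}, is to pass to character modules $M^{+}=\Hom_{\mathbb{Z}}(M,\mathbb{Q}/\mathbb{Z})$: a short exact sequence is pure exact if and only if its character-dual sequence splits, so $X^{+}$ is a direct summand of $C^{+}$, hence $X^{++}$ is a direct summand of $C^{++}$ and the pure-injective envelope of $X$ is a direct summand of $C^{++}$; one then shows that a precovering class closed under direct products absorbs the double character dual of each of its members, concluding $X^{++}\in\mathcal{C}$ and therefore $X\in\mathcal{C}$, since $X$ is a pure submodule of $X^{++}$. This absorption statement is the technical core, which I would either reconstruct by the pure-injectivity methods of \cite{HJ08} or import verbatim; once it is in place the cycle closes and $(1)$, $(2)$, $(3)$, $(4)$ are all equivalent.
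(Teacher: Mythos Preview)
The paper does not prove this lemma at all: it is stated as a citation of \cite[Lemma 3.4]{zxl20}, with the preceding sentence explaining that the result is assembled from \cite[Theorem 3.4]{DD19}, \cite[Theorem 3.4.8]{P09} and \cite[Theorem 2.5]{HJ08}. Your proposal therefore goes well beyond what the paper itself offers, since you actually sketch the cycle $(1)\Rightarrow(3)\Rightarrow(4)\Rightarrow(2)\Rightarrow(1)$ and the classical equivalence $(1)\Leftrightarrow(2)$. Your outline is sound: the formal links $(3)\Rightarrow(4)$ and $(2)\Rightarrow(1)$ are handled correctly, $(1)\Rightarrow(3)$ is the standard Enochs argument you indicate, and for the hard implication $(4)\Rightarrow(2)$ you rightly identify the Holm--J{\o}rgensen absorption step (that $\mathcal{C}$ contains the double character duals of its members) as the technical core and propose to import it from \cite{HJ08}. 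In short, your approach is correct and is precisely the content behind the three references the paper cites; the paper simply chooses to quote the packaged result rather than reproduce the argument.
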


\begin{theorem}\label{lu-c} Let $R$ be a ring. Then the following statements hold.
\begin{enumerate}
\item every pure quotient of a Lucas module is a Lucas module.
\item every $R$-module has a Lucas cover.
\end{enumerate}
\end{theorem}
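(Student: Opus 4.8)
The plan is to obtain both statements at once from Lemma \ref{definable}, applied to the class $\Lu$ of all Lucas $R$-modules. The first point to check is that $\Lu$ satisfies the running hypothesis of that lemma, namely that it is closed under pure submodules and under direct products. Both are already available: Proposition \ref{l-p}(2) gives that every pure submodule of a Lucas module is a Lucas module, while Proposition \ref{l-p}(1) gives that an arbitrary direct product of Lucas modules is a Lucas module.

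With the hypothesis verified, I would invoke Theorem \ref{dir lim lucas}, which asserts exactly that $\Lu$ is closed under direct limits. Hence condition (1) of Lemma \ref{definable} holds for $\Lu$, so $\Lu$ is definable, and by the equivalences in that lemma conditions (2), (3) and (4) hold as well. Condition (2) states that every pure quotient of a Lucas module is a Lucas module, which is part (1) of the theorem; condition (3) states that $\Lu$ is covering, i.e.\ every $R$-module admits an $\Lu$-cover, which by definition is a Lucas cover, giving part (2).

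Since all of the substance sits in Proposition \ref{l-p}, Theorem \ref{dir lim lucas}, and the cited Lemma \ref{definable}, I do not anticipate a genuine obstacle: the argument is essentially a matching of hypotheses. If one wanted a hands-on proof of part (1) without Lemma \ref{definable}, the one step worth isolating is that a pure quotient $N=M/A$ of a $\Q$-torsion-free module $M$ is again $\Q$-torsion-free: if $Ix\subseteq A$ with $I\in\Q$, then $1+I\mapsto x+A$ defines an $R$-map $R/I\to N$ which, since $R/I$ is finitely presented and $M\to N$ is a pure epimorphism, lifts to an $R$-map $R/I\to M$; its value at $1+I$ is a $\Q$-torsion element of $M$, hence $0$, so $x\in A$. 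Pushing this to the vanishing of $\Ext^1_R(R/I,N)$, however, is precisely the content that Lemma \ref{definable} already packages, so passing through that lemma is the cleaner route.
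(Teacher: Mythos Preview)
Your proposal is correct and matches the paper's own proof essentially line for line: the paper likewise invokes Proposition~\ref{l-p} to get closure under pure submodules and direct products, then applies Theorem~\ref{dir lim lucas} together with Lemma~\ref{definable} to conclude both statements. Your additional remark sketching a direct argument for $\Q$-torsion-freeness of pure quotients is a nice aside but is not needed.
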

\begin{proof} By Proposition \ref{l-p}, the class of Lucas modules is closed under  pure submodules and direct products. So $(1)$ and $(2)$ hold by Theorem \ref{dir lim lucas} and Lemma \ref{definable}.
\end{proof}

In the rest of this article, we consider when every $R$-module has a Lucas cover with unique mapping property.

Recall that a ring $R$ is called a $\WQ$-ring if $R$ itself is a Lucas $R$-module, or equivalently $R=Q_0(R)$, where $Q_0(R):=\{\alpha\in T(R[x])\mid\ \mbox{there exists}\ I\in \Q\ \mbox{such that } I\alpha\subseteq R\}$. So an integral domain is a $\WQ$-ring if and only if it is a field.  Now we characterize $\WQ$-rings over which every module has a Lucas cover with the unique mapping property.

\begin{proposition}\label{WQump} Let $R$ be a $\WQ$-ring. If every $R$-module has a Lucas cover with the unique mapping property, then $R$ is a $\DQ$-ring.
\end{proposition}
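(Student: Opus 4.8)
The plan is to prove the a priori stronger statement that \emph{every} $R$-module is a Lucas module; this is exactly what it means for $R$ to be a $\DQ$-ring, since a proper ideal $I\in\Q$ would make $R/I$ a nonzero $R$-module that is $\Q$-torsion and, were it a Lucas module, also $\Q$-torsion-free, which is impossible. The crucial point is that, $R$ being a $\WQ$-ring, the module $R$ itself lies in $\Lu$, so that $R$ is an admissible test object in the unique mapping property.

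So fix an arbitrary $R$-module $M$ and let $f\colon C\to M$ be a Lucas cover of $M$ with the unique mapping property, which exists by hypothesis. I claim $f$ is an isomorphism. Given $m\in M$, the homomorphism $f_m\colon R\to M$ determined by $f_m(1)=m$ has domain the Lucas module $R$, so the unique mapping property produces a \emph{unique} $g_m\colon R\to C$ with $f_m=f\circ g_m$; the existence of $g_m$ gives $m=f(g_m(1))\in\Im f$, so $f$ is surjective. For injectivity, suppose $c,c'\in C$ both satisfy $f(c)=f(c')=m$; then the two homomorphisms $R\to C$ sending $r$ to $rc$ and to $rc'$ respectively have the same composite $f_m$ with $f$, so by the uniqueness clause they coincide, whence $c=c'$. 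Thus $f$ is a bijective $R$-homomorphism, hence an isomorphism, and $M\cong C$ is a Lucas module. As $M$ was arbitrary, every $R$-module is a Lucas module, i.e.\ $R$ is a $\DQ$-ring.

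I do not anticipate a genuine obstacle: the whole argument rests on the single observation that $R\in\Lu$ --- available \emph{only} because $R$ is a $\WQ$-ring --- makes $R$ a legitimate test module in the unique mapping property, after which the existence and uniqueness halves of that property deliver surjectivity and injectivity of the cover respectively. The one point requiring care is the direction of composition for covers, namely $f'=f\circ g$ with the cover on the left (dual to the enveloping situation used earlier for Lucas envelopes), which is what licenses comparing the maps $r\mapsto rc$ and $r\mapsto rc'$ through their composites with $f$.
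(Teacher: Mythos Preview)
Your proof is correct and follows essentially the same approach as the paper: both arguments exploit that $R$ (or, in the paper, a free module of large rank) lies in $\Lu$ when $R$ is a $\WQ$-ring, and then use the existence and uniqueness clauses of the unique mapping property to show that the Lucas cover $f\colon C\to M$ is bijective, whence $M\cong C$ is Lucas. Your version is in fact slightly more streamlined---you test directly with $R$ and read off surjectivity and injectivity element by element, whereas the paper first deduces surjectivity from the fact that projectives are Lucas, then uses a large free module $F$ to show $\Hom_R(F,\Ker f)=0$ and hence $\Ker f=0$; but the underlying idea is the same.
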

\begin{proof}  Suppose $R$ is a $\WQ$-ring, then every projective $R$-module is a Lucas module. So every Lucas cover is an epimorphism. Let $M$ be an $R$-module and $0\rightarrow K\rightarrow C\xrightarrow{f} M\rightarrow 0$ be an exact sequence with $f$ the Lucas cover of $M$. Let $F$ be a free $R$-module with the rank $\aleph$ which is greater than the card of the generators of $M$. Then $F$ is a Lucas module, and so we have the following exact sequence $$0\rightarrow \Hom_R(F,K)\rightarrow \Hom_R(F,C)\rightarrow \Hom_R(F,M)\rightarrow 0.$$
Since $f$ has the unique mapping property, the natural map $\Hom_R(F,C)\rightarrow \Hom_R(F,M)$ is a monomorphism. Hence $\Hom_R(F,K)\cong \prod\limits_{\aleph} \Hom_R(R,K)=0$, and so $K=0.$ It follows that $M\cong C$ is a Lucas $R$-module. Consequently, $R$ is a $\DQ$-ring.
\end{proof}

\begin{example}\cite[Example 12]{L93}
Let $D=L[X^2,X^3,Y]$, $\Pp=Spec(D)-\{\langle X^2,X^3,Y\rangle\}$, $B=\bigoplus\limits_{\p\in \Pp}K(R/\p)$  and $R=D(+)B$ where $L$ is a field and $K(R/\p)$ is the quotient field of $R/\p$. Since $Q_0(R)=R$, $R$ is a $\WQ$-ring by \cite[Proposition 3.8]{wzcc20}. Note that every finitely generated $R$-ideal of the form $J(+)B$ with $\sqrt{J}=\langle X^2,X^3,Y\rangle$ is a semi-regular. Hence $R$ is not a $\DQ$-ring by \cite[Proposition 2.2]{fkxs20}. So by Proposition \ref{WQump} and its proof, every non-Lucas $R$-modules satisfies that their Lucas cover does not have the unique mapping property.
\end{example}

We show that the Lucas covers of modules over integral domains have the unique mapping property.

\begin{corollary}\label{lu-c-v} Let $R$ be an integral domain with its quotient field $Q$, and $\mathcal{V}$ the class of all vector spaces over $Q$. Then the following statements hold.
\begin{enumerate}
\item $\mathcal{V}$ is closed under its  pure quotient and direct limit.
\item every $R$-module has a  $\mathcal{V}$-cover with the unique mapping property.
\end{enumerate}
\end{corollary}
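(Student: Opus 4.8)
The plan is to derive both statements from facts already established for the class $\Lu$ of Lucas modules, using Proposition~\ref{int-luc} to identify $\mathcal{V}$ with $\Lu$, and then to isolate the one feature of vector spaces that a general covering class lacks: every sub-$Q$-space of a $Q$-vector space is a direct summand. So I would begin by recording that, since $R$ is an integral domain, Proposition~\ref{int-luc} gives $\mathcal{V}=\Lu$. Part~(1) is then immediate: $\Lu$ is closed under pure quotients by Theorem~\ref{lu-c}(1) and under direct limits by Theorem~\ref{dir lim lucas}. (One could also argue directly, since a pure quotient of a $Q$-vector space is divisible and torsion-free, hence a $Q$-vector space, and a direct limit of $Q$-modules is a $Q$-module.) Likewise, the existence part of~(2) is Theorem~\ref{lu-c}(2): every $R$-module $M$ has a Lucas cover, which under $\mathcal{V}=\Lu$ is a $\mathcal{V}$-cover $f\colon V\to M$.

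The real content is the unique mapping property of this cover. Given any $f'\colon V_0\to M$ with $V_0\in\mathcal{V}$, the precover property already furnishes some $g\colon V_0\to V$ with $f'=f\circ g$, so the task is to prove $g$ is unique. I would reduce this to the claim that $\ker f$ contains no nonzero $Q$-subspace of $V$. Granting the claim: if $g_1,g_2\colon V_0\to V$ both satisfy $f\circ g_i=f'$, then $h:=g_1-g_2$ has image in $\ker f$; but $\Im h$ is a quotient of the divisible module $V_0$, hence divisible, and a submodule of the torsion-free module $V$, hence torsion-free, so $\Im h$ is a $Q$-vector space and, being contained in $V$, a $Q$-subspace of $V$ lying inside $\ker f$. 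The claim then forces $\Im h=0$, i.e.\ $g_1=g_2$.

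It remains to prove the claim, and this is where the argument genuinely uses the covering property together with the linear structure. Suppose $D\subseteq\ker f$ is a nonzero $Q$-subspace of $V$. Since $V$ is a $Q$-vector space, $D$ has a $Q$-linear complement, say $V=D\oplus W$; let $\pi\colon V\to W$ be the projection with $\ker\pi=D$ and $\iota\colon W\hookrightarrow V$ the inclusion. Because $D\subseteq\ker f$, the endomorphism $e:=\iota\circ\pi$ of $V$ satisfies $f\circ e=f$, so by the definition of a cover $e$ must be an isomorphism; this contradicts $\ker e=D\neq 0$. Hence no such $D$ exists, proving the claim and with it the unique mapping property.

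The only genuinely delicate step is the claim about $\ker f$; the remaining arguments are bookkeeping, and the whole proof is uniform in $R$ (if $R$ happens to be a field, $\mathcal{V}$ is the class of all $R$-modules and the corollary is trivial).
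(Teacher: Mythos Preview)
Your proof is correct and follows essentially the same route as the paper: identify $\mathcal{V}=\Lu$ via Proposition~\ref{int-luc}, invoke Theorems~\ref{dir lim lucas} and~\ref{lu-c} for closure and existence of covers, and then for the unique mapping property reduce to the fact that $\ker f$ contains no nonzero $Q$-subspace, whence any $R$-map from a $Q$-vector space into $\ker f$ must vanish. The one difference is that the paper cites \cite[Lemma~5.8]{gt} for the claim about $\ker f$, whereas you supply a direct argument (splitting off a putative $Q$-subspace $D$ and using the cover axiom to derive a contradiction); your proof of this step is exactly the standard one and makes the corollary self-contained.
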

\begin{proof} We just need to show every $\mathcal{V}$-cover has the unique mapping property since the others follow from Proposition \ref{int-luc}, Theorem \ref{dir lim lucas} and Theorem \ref{lu-c}.  Let $f:V\rightarrow M$ be a $\mathcal{V}$-cover of $M$. Then by \cite[Lemma 5.8]{gt}, $\Ker(f)$ has no $Q$-subspace of $V$. We claim $\Hom_R(V',\Ker(f))=0$. Indeed, let $h:V'\rightarrow \Ker(f)$ be an $R$-homomorphism. Consider the composition $\phi: V'\xrightarrow{h} \Ker(f)\xrightarrow{i}V$. Since $\Hom_D(V',V)\cong \Hom_Q(V',V)$, we have $\Im(\phi)=\Im(h)$ is a $Q$-subspace of $V$, and hence equal to $0$. So $h$ is an zero map. Hence $\Hom_R(V',\Ker(f))=0$. Considering the following exact sequence: $$0=\Hom_R(V',\Ker(f))\rightarrow \Hom_R(V',V)\rightarrow \Hom_R(V',M),$$ we have $f$ is a $\mathcal{V}$-cover with the unique mapping property.
\end{proof}

\begin{acknowledgement}\quad\\
The second author was supported by  National Natural Science Foundation of China (No. 11901412). The third author was supported by  National Natural Science Foundation of China (No. 12201361).
\end{acknowledgement}

\end{document}